\newcommand*{\N}{\ensuremath{\mathbb{N}}}
\newcommand*{\Z}{\ensuremath{\mathbb{Z}}}
\newcommand*{\R}{\ensuremath{\mathbb{R}}}
\newcommand*{\C}{\ensuremath{\mathbb{C}}}
\newtheorem{defi}{Definition}[section]
\newtheorem{lemma}[defi]{Lemma}
\newtheorem{theorem}[defi]{Theorem}
\newtheorem{assumption}[defi]{Assumption}
\renewcommand{\i}{\mathrm{i}}
\renewcommand{\d}[1]{\,\mathrm{d}#1 \,}
\newcommand{\ol}[1]{\overline{#1}}
\newcommand{\Lk}{\mathbb{L}_k}
\DeclareMathOperator{\supp}{\mathrm{supp}}
\newcommand{\grad}{\nabla}
\renewcommand{\div}{\mathrm{div} \,}
\renewcommand{\Re}{\mathrm{Re}\,}
\renewcommand{\Im}{\mathrm{Im}\,}
\newcommand{\eps}{\varepsilon}
\renewcommand{\d}[1]{\,\mathrm{d}#1 \,}
\renewcommand{\div}{\mathrm{div}}
\newcommand{\Rg}{\mathrm{Range}}
\begin{document}

\sloppy

\title{On the inverse scattering from anisotropic periodic layers and transmission 
eigenvalues\thanks{This work was partially supported by NSF grant DMS-1812693 and the Faculty Enhancement Program Award 
from Kansas State University}}
\author{Isaac Harris\thanks{Department of Mathematics, Purdue University, West Lafayette, IN 47907; (\texttt{harri814@purdue.edu})} \and Dinh-Liem Nguyen\thanks{Department of Mathematics, Kansas State University, Manhattan, KS 66506; (\texttt{dlnguyen@ksu.edu},
 \texttt{jtsands@ksu.edu},  \texttt{ttruong@ksu.edu}) } 
 \and Jonathan Sands\footnotemark[3] \and Trung Truong\footnotemark[3]
}
\date{}

\maketitle

\begin{abstract}
This paper is concerned with the inverse scattering and the transmission eigenvalues for  anisotropic periodic layers. 
For the inverse scattering problem, we study the Factorization method for  shape reconstruction of  the periodic  
layers from near field scattering data. This method provides a fast numerical algorithm as well as a 
unique determination for the shape reconstruction of the scatterer. We present a rigorous justification and 
numerical examples for the factorization method. The transmission eigenvalue problem in scattering have recently attracted 
a lot of attentions. Transmission eigenvalues can be determined from scattering data and they can provide information 
about the material parameters of the scatterers. In this paper we formulate the interior transmission eigenvalue problem
 and  prove the existence of infinitely many transmission eigenvalues for the scattering from anisotropic periodic layers. 
\end{abstract}

\textbf{Keywords.} inverse scattering, Factorization method, transmission eigenvalues, anisotropic periodic structure, 
  TM polarization

\bigskip

\textbf{AMS subject classification.} 35R30, 78A46, 65C20


\section{Introduction}
We study in this paper the inverse scattering problem and the transmission eigenvalues for anisotropic periodic 
structures in $\R^2$. The periodic structures of interest are supposed to be unboundedly periodic  in 
the horizontal direction and bounded in the vertical direction. This can be considered as the model for one-dimensional 
photonic crystals. 
We are mainly concerned with a sampling method for  shape reconstruction of the periodic scatterers from near field data,
the formulation of the transmission eigenvalue problem and existence of transmission eigenvalues. This study 
is motivated by applications of nondestructive evaluations for periodic structures in optics. The development of 
numerical methods for shape reconstruction of periodic structures in inverse scattering  has been an active research 
topic during the past years, see~\cite{Arens2005, Arens2003a, Bao2014, Elsch2012, Hadda2017, Jiang2017, 
Lechl2013b, Nguye2014, Sandf2010, Yang2012} for a non exhaustive list of results. 
However, most of the results focus on the case of isotropic periodic scattering structures. 
The case of anisotropic periodic structures has not been studied much.  The first part of this paper is devoted to 
a study of the Factorization method for solving the inverse scattering problem in two dimensions. This
two-dimensional problem can be considered as the (simplified) TM-polarization case of the full Maxwell problem for anisotropic  
periodic structures. This is an extension of
the results for the half space problem  in~\cite{Nguye2014} to the full space one. While we only need to measure
 scattering data  above the periodic layer in the half space problem, the analysis for the Factorization method for 
the full space case in this paper requires the data measured from both sides of the periodic layer. Therefore, the 
measurement operator and the analysis of its factorization have to be modified for the theoretical analysis. 
We want to point out that the inverse scattering problem  for anisotropic periodic layers has also been recently studied in the paper~\cite{Nguye2019}. 
The sampling methods developed in~\cite{Nguye2019} can detect the local perturbation and/or the  periodic layer itself.  
However, it is assumed in the cited paper that the complement of the periodic layer in one period is connected, while our theoretical 
analysis does not need this assumption. 

The interior transmission eigenvalues in scattering theory have recently received a great attention
thanks to their mathematical interests and applications. 
Transmission eigenvalue problems are non self-adjoint as well as non-linear which makes their investigation mathematically interesting. One can determine these transmission eigenvalues from scattering data (see for e.g. \cite{Cakon2010a} and \cite{Kirsc2013}). More importantly, they can provide information about the material parameters of the scattering medium. {\color{black} In general, they are monotone with respect to the material parameters which means they can be used as a target signature to determine changes in the scatterer. }
 The transmission eigenvalue problem for anisotropic medium scattering has been studied in~\cite{Harri2014, Kleef2018}. We also refer
 to~\cite{Cakon2015} for a study of homogenization  of the transmission eigenvalue problem for periodic media. 
Recent results and developments of the transmission eigenvalues and their applications can be found in~\cite{Cakon2016}.
The interior transmission eigenvalue problem is less well understood in the case of periodic media, 
and has not been studied in the context considered in this paper. We present first in this work a formulation 
of the  transmission eigenvalue problem. Second, we follow the theory in~\cite{Cakon2015} to prove  that 
there exists infinitely many  transmission eigenvalues
for the periodic layer scattering  under certain assumption.

The outline of the paper is as follows. After the introduction, we present in Section 2 a  formulation of the direct scattering problem
as well as a brief discussion on its variational form. Section 3 is dedicated to  the inverse scattering problem
and the justification of the Factorization method for solving the inverse problem. In Section 4, we formulate the transmission 
eigenvalue problem for the scattering from anisotropic periodic structures and prove the existence of infinitely many transmission 
eigenvalues.  We present some numerical examples in  Section 5 to demonstrate the performance of the Factorization method
for the shape reconstruction, and a short summary of the paper in Section 6.


\section{Direct problem formulation}
We consider a two-dimensional  layer which is $2\pi$-periodic in $x_1$-direction and bounded in $x_2$-direction. 
Let  $A$ be a matrix-valued bounded function which is  $2\pi$-periodic with respect to $x_1$. Suppose that 
 this periodic scattering layer  is fully characterized by $A$ and that the medium outside of the
 layer is homogeneous which means $A = I$ in this area.  Note that we could assume an 
 arbitrary value for the period of the layer. The period $2\pi$  is chosen for the convenience of the presentation.

Suppose that this periodic layer is illuminated by  the incident plane wave 
 \begin{equation}
 \label{eq:incidentWave}
 u_{\mathrm{in}}(x) = \text{e}^{\i (d_1x_1 + d_2x_2)}
 \end{equation}
 where  $(d_1,d_2)^\top$ is the wave propagation vector direction
 satisfying $d_1^2+d_2^2 = k^2$, $k>0$ is the wave number  and $d_2 \neq 0$. The latter condition means we 
 are only interested  in incident plane wave propagating downward or upward toward the layer, see also Figure~\ref{fg0}
 for a schematic of the periodic scattering. 
The scattering of this incident plane wave by  the anisotropic periodic  layer
produces the scattered field $u_{\mathrm{sc}}$ described by
\begin{align}
\label{eq:HmodeEquation1}
 \div(A\nabla u_{\mathrm{sc}}) + k^2 u_{\mathrm{sc}} = -\div(Q\nabla  u_{\mathrm{in}}) \quad \text{in } \R^2,
\end{align}
where $Q$ is the contrast given by
\[
 Q = A - I.
\]
It  is important to note that the incident field  $u_{\mathrm{in}}$ is $\alpha$-quasi-periodic in $x_1$ with period $2\pi$, that means, for
$\alpha := d_1$, it satisfies
\[
  u_{\mathrm{in}}(x_1 + 2\pi n,x_2) = \text{e}^{\i2\pi n \alpha} u_{\mathrm{in}}(x_1,x_2), \quad n\in \Z, \quad (x_1,x_2)^\top \in \R^2.
\]
From now on we call functions with this property quasi-periodic functions for short. 
It is well known for this  scattering problem that the scattered field $u_{\mathrm{sc}}$ 
must also be quasi-periodic (in $x_1$), and that the direct problem of finding 
the scattered field can be reduced to one period 
$$
\Omega := (-\pi,\pi)\times \R.
$$
Let $h>0$ be a positive constant such that
\begin{align}
\label{hh}
h>\sup \big\{|x_2|: \, (x_1,x_2)^\top \in \supp(Q) \big\},
\end{align}
where $\supp(Q)$ is the support of the contrast $Q$.
The direct scattering problem is completed by the Rayleigh 
expansion condition for the scattered field
\begin{equation}
\label{eq:radiationCondition}
  u_{\mathrm{sc}}(x) =
  \begin{cases}
   \sum_{n\in \Z} \widehat{u}^{+}_n \text{e}^{\i\alpha_n x_1 + \i\beta_n (x_2- h)}, &  x_2 > h, \\
      \sum_{n\in \Z} \widehat{u}^{-}_n \text{e}^{\i\alpha_n x_1 - \i\beta_n (x_2 + h)}, &  x_2 < - h, 
  \end{cases}
\end{equation}
where 
\begin{align*}
   \alpha_n := \alpha + n, \quad 
\beta_n:= \begin{cases} 
              \sqrt{k^2 - \alpha_n^2}, & k^2 \geq \alpha_n^2 \\ 
              \i \sqrt{\alpha_n^2- k^2}, & k^2 < \alpha_n^2
            \end{cases}, \quad  n \in \Z,
\end{align*}
and $(\widehat{u}^{\pm}_n)_{n\in\Z}$ are the Rayleigh sequences given by
$$ 
\widehat{u}^{\pm}_n := \frac{1}{2 \pi}\int_{0}^{2\pi} u^s(x_1,\pm h) e^{-\i \alpha_n x_1} \d{x_1}.
$$
The condition~\eqref{eq:radiationCondition} means that the scattered field $u_{\mathrm{sc}}$ is 
an outgoing wave (see e.g.~\cite{Bonne1994}). Note that only a finite number 
of terms in~\eqref{eq:radiationCondition} are propagating plane waves which 
are called propagating modes, the rest are evanescent modes which correspond 
to exponentially decaying terms. This also implies the absolute convergence of the series in
~\eqref{eq:radiationCondition}. From now, we call a function satisfying~\eqref{eq:radiationCondition} a
radiating function. In addition, we also assume that $\beta_n$ is nonzero for all $n$ which means  
the Wood anomalies are excluded in our analysis.

 \begin{figure}[!ht]
 \centering
  \includegraphics[width=12cm]{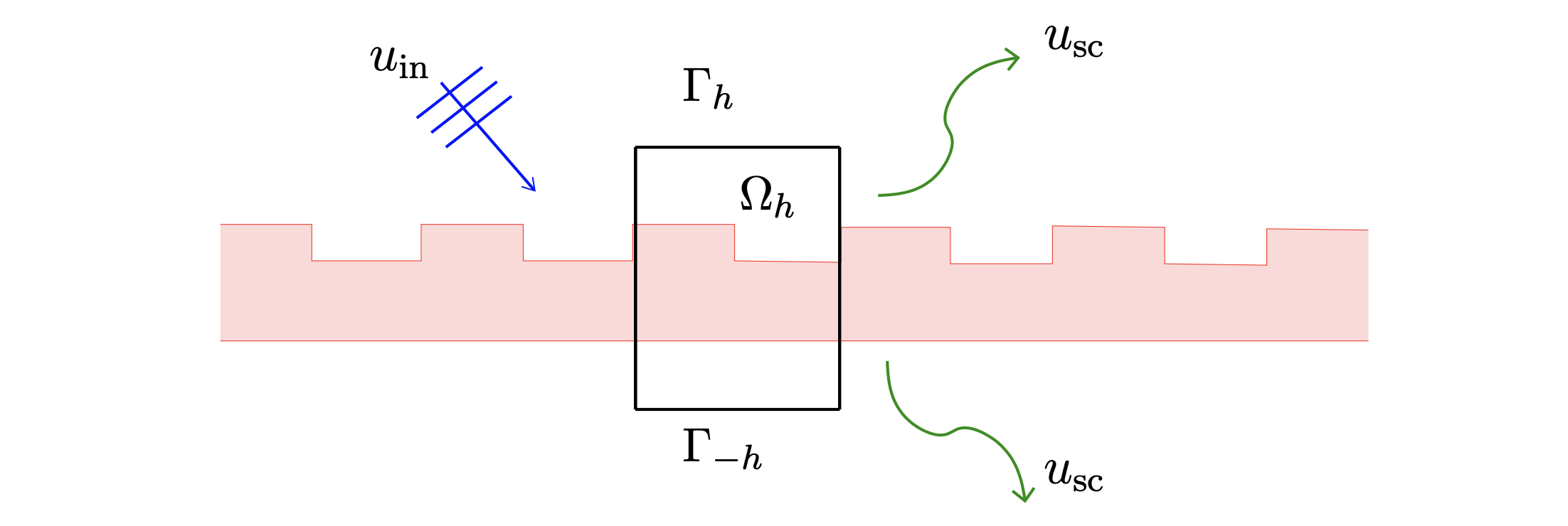}
 \caption{Schematic of the scattering  from a penetrable periodic layer}
  \label{fg0}
 \end{figure}

Well-posedness of scattering problem~\eqref{eq:HmodeEquation1}--\eqref{eq:radiationCondition}
is well-known, see for instance~\cite{Bonne1994}.
For $h$ given in~\eqref{hh}, consider a truncation of $\Omega$ as  
\[
  \Omega_ h:=(-\pi,\pi)\times(- h, h), \quad \Gamma_{\pm h}:= (-\pi,\pi)\times\{\pm h\}.
\]
The variational form of the direct problem is formulated in 
\[
H^1_{\alpha}(\Omega_h) := \big\{u\in H^1(\Omega_ h): \,  u=U|_{\Omega_ h}
  \text{ for some } \alpha\text{-quasi-periodic } U \in H^1_{\text{loc}}(\R^2) \big\}.
\]
Now the  variational problem is to find $u_{\mathrm{sc}} \in H^1_{\alpha}(\Omega_ h)$ 
such that, for $f =  Q \nabla u_{\mathrm{in}}$,
\begin{equation}
 \begin{split}
   \label{eq:variationalPeriodic}
   \mathcal{B}(u_{\mathrm{sc}},v;A)  = -\int_{\Omega_ h} f \cdot \nabla \overline{v} \d{x}, \quad \text{for all } v \in H^1_{\alpha}(\Omega_ h),
 \end{split}
\end{equation}
 where
$$
   \mathcal{B}(u_{\mathrm{sc}},v;A) := 
   \int_{\Omega_ h}  A\nabla u_{\mathrm{sc}}  \cdot \nabla \ol{v}
    - k^2  u_{\mathrm{sc}} \ol{v}  \d{x} 
   - \int_{\Gamma_ h} T^+(u_{\mathrm{sc}})\ol{v}\d{s}
    - \int_{\Gamma_{- h}} T^-(u_{\mathrm{sc}})\ol{v}\d{s}.
$$
 Here the operators 
$T^{\pm}: H^{1/2}(\Gamma_{\pm h}) \to H^{-1/2}(\Gamma_{\pm h})$, 
defined by $T^{\pm}(\varphi) = \i \sum_{n \in\Z} \beta_n \widehat{\varphi}^{\pm}_n e^{\i \alpha_n x_1}$,
are the exterior Dirichlet-to-Neumann operators. It was proved in~\cite{Bonne1994} 
that, under  uniform ellipticity conditions on $A$, the direct problem 
has a unique solution for all but a discrete set of wave number $k$. In this paper 
we only consider the wave number $k>0$ such that the direct problem has a unique solution.  
We also note that the solution to problem~\eqref{eq:variationalPeriodic} can be extended from $\Omega_h$ to 
$\R^2$ as the solution of the direct scattering problem by using the Rayleigh radiation 
condition in  $x_2$-direction and  the quasi-periodicity in  $x_1$-direction.


\section{The inverse problem}
In this section we formulate the inverse problem of interest.  First we define that
 \[
 \overline{D}  \subset \R^2: \text{ the support of the contrast $Q$ in the period $\Omega$}. 
 \]
 The following assumption is important for the analysis of the inverse problem.
 \begin{assumption}
 \label{th:assumption1}
 Suppose that $D$ is a Lipschitz domain
and  that $\Omega\setminus \ol{D}$ has at most two unbounded connected components.
The contrast $Q(x)$ is complex-valued and symmetric for almost all $x \in \R^2$. 
  There exists positive constants $c_1, c_2$ such that $\Re Q(x)\xi\cdot \ol{\xi} \geq c_1|\xi|^2$ and 
   $\Im Q(x)\xi\cdot \ol{\xi} \leq -c_2|\xi|^2$
   for all $\xi \in \C^2$ and almost all $x \in D$. 
  Furthermore, the well-defined square root $(\Re Q(x))^{1/2}$ is also positive definite with inverse $(\Re Q(x))^{-1/2}$, 
  and $(\Re Q)^{\pm1/2}$ belong to $L^{\infty}(D)^{2\times2}$.
 \end{assumption}

Note that by assuming that $\Im Q$ is negative definite we assume absorbing materials for the periodic scatterer.  This assumption 
also excludes the transmission eigenvalues, and is important in our factorization method analysis. However, for the study of 
transmission eigenvalues in Section 4 we assume that $\Im Q = 0$.
Denote by $\ell^2(\Z)$ the space of square summable sequences. Thanks to 
the well-posedness of the direct problem we can define the solution operator $G: [L^2(D)]^2 \to [\ell^2(\Z)]^2$ 
by  
\begin{align}
\label{G}
G(f) = (\widehat{u}^+_n, \widehat{u}^-_n)^\top_{n \in \Z},
\end{align}
where $(\widehat{u}^+_n, \widehat{u}^-_n)^\top_{n \in \Z}$ are the Rayleigh sequences of solution $u \in H^1_{\alpha}(\Omega_h)$ of
\begin{equation}
\label{eq:VariationalInverse2}
\mathcal{B}(u,v;A)
= -\int_{D}Q (\Re Q)^{-1/2} f\cdot \nabla \ol{v} \d{x} \quad \text{for all } v \in H^1_{\alpha}(\Omega_ h).
\end{equation}
For the inverse problem we consider the quasi-periodic incident  plane waves
\begin{equation}
\label{eq:IncidentFields}
\varphi^{\pm}_n = \text{e}^{\i(\alpha_nx_1 - \beta_nx_2)} \pm  \text{e}^{\i(\alpha_nx_1 + \beta_nx_2)}, \quad n \in \Z.
\end{equation}
Since problem~\eqref{eq:VariationalInverse2} is linear, a linear combination of several incident fields will lead to a 
corresponding linear combination of resulting scattered fields. We consider a linear combination 
using sequences $(a_n)_{n\in \Z} = (a^+_n,a^-_n)^\top_{n\in \Z} \in [\ell^2(\Z)]^2$ and define the  operator 
$H:[\ell^2(\Z)]^2 \to [L^2(D)]^2$ by
\begin{equation}
\label{eq:IncidentField2}
 H(a_n) = (\Re Q)^{1/2} \sum\limits_{n \in \Z} \left(\frac{a^+_n}{\beta_nw^+_n} \nabla \varphi^{+}_n 
+ \frac{a^-_n}{\beta_nw^-_n}\nabla \varphi^{-}_n\right), 
\end{equation}
where
\begin{equation*}
 w^+_n: =  \left \{ \begin{array}{ll}
 \i,  & k^2 > \alpha_n^2, \\
\text{e}^{-\i\beta_n h},   & k^2 < \alpha_n^2,\\
          \end{array} \right. \quad
w^-_n: =  \left \{ \begin{array}{ll}
 1,  & k^2 > \alpha_n^2, \\
\text{e}^{-\i\beta_n h},   & k^2 < \alpha_n^2.\\
          \end{array} \right. 
\end{equation*}
We use the weights $\beta_n w^{\pm}_n$ in the linear combination~\eqref{eq:IncidentField2} 
 for the convenience of our calculations. 

Motivated by applications in near field optics we consider near field measurements in our inverse problem. 
More precisely, we define  
the \textit{near field} operator $N: [\ell^2(\Z)]^2 \to [\ell^2(\Z)]^2$ 
 mapping  sequence $(a_n)_{n \in\Z}$ to the Rayleigh sequences of the scattered field generated by the 
linear combinations of the incident plane waves  in~\eqref{eq:IncidentFields}, i.e.
\begin{equation}
\label{NearField}
 N(a_n) := (\widehat{u}^+_j, \widehat{u}^-_j)^\top_{j\in\Z},
\end{equation}
where $u \in H^1_{\alpha}(\Omega_h)$ is the radiating solution to~\eqref{eq:VariationalInverse2} 
for  $f = H(a_n)$.  Here we note that the Rayleigh sequences in~\eqref{NearField} 
are given by the solution operator $G$ acting on the function $H(a_n)$, which also means that
the near field operator can be factorized as 
$$
N = GH.
$$
Now the inverse scattering problem can be stated as follows.\\
{\bf Inverse problem:} find the  support $\overline{D}$ of the periodic 
contrast $Q$ given near field operator $N$.



\subsection{The adjoint operator $H^*$}

We solve the inverse problem using the factorization method. Factorizing the near field operator is one of the 
important steps of this method. Before doing that, in the next lemma, 
we find the adjoint  $H^*$ of operator $H$ in~\eqref{eq:IncidentField2}. 
\begin{lemma}
\label{th:H*}
For $f \in [L^2(D)]^2$, the adjoint $H^*:[L^2(D)]^2 \to [\ell^2(\Z)]^2$ of  operator $H$ in~\eqref{eq:IncidentField2}
satisfies
\begin{equation}
\label{eq:H*}
 (H^*f)_n = 4\pi
\left( \begin{array}{ccc}
\widetilde{w}^+_n  &\widetilde{w}^+_n  \\
\widetilde{w}^-_n & -\widetilde{w}^-_n
\end{array} \right)
\left( \begin{array}{ccc}
\widehat{u}^+_n  \\
\widehat{u}^-_n
\end{array} \right), \quad n \in \Z,
\end{equation}
where
\begin{equation*}
\widetilde{w}^+_n =  \left \{ \begin{array}{ll}
 \text{e}^{-\i\beta_n h},  & k^2 > \alpha_n^2, \\
\i,   & k^2 < \alpha_n^2,\\
          \end{array} \right. \quad
\widetilde{w}^-_n=  \left \{ \begin{array}{ll}
 \text{e}^{-\i\beta_n h},  & k^2 > \alpha_n^2, \\
1,   & k^2 < \alpha_n^2,\\
          \end{array} \right. 
\end{equation*}
and $\widehat{u}^{\pm}_n$ are the Rayleigh sequences of  radiating solution $u \in H^1_{\alpha}(\Omega_h)$ to
\begin{equation}
 \label{eq:eqH*}
\mathcal{B}(u,v;I)   = \int_{D}  (\Re Q)^{1/2}f \cdot \nabla \overline{v}\d{x}, \quad \text{for all } v\in H^1_{\alpha}(\Omega_h).
\end{equation}
\end{lemma}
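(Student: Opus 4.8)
The plan is to read off $H^*$ directly from the defining relation $\langle Ha,f\rangle_{[L^2(D)]^2}=\langle a,H^*f\rangle_{[\ell^2(\Z)]^2}$ and then to identify the integrals that appear with values of the sesquilinear form $\mathcal{B}(\cdot,\cdot;I)$ tested against the quasi-periodic plane waves $\varphi^\pm_n$. Expanding $\langle Ha,f\rangle_{[L^2(D)]^2}=\int_D(Ha)\cdot\overline{f}\,\d{x}$ and using that $(\Re Q)^{1/2}$ is a real symmetric matrix, one transfers $(\Re Q)^{1/2}$ onto $f$; interchanging the series defining $H(a_n)$ with the integral — legitimate because the summands decay exponentially in $n$ on the bounded set $D$, which is exactly what the weights $w^\pm_n$ are designed to guarantee — and comparing coefficients of $a^\pm_n$ gives, after conjugation,
\[
(H^*f)^\pm_n=\frac{1}{\overline{\beta_n w^\pm_n}}\int_D (\Re Q)^{1/2}f\cdot\nabla\overline{\varphi^\pm_n}\,\d{x}.
\]
Since each $\varphi^\pm_n$ is smooth and $\alpha$-quasi-periodic it belongs to $H^1_\alpha(\Omega_h)$ and may be used as a test function in \eqref{eq:eqH*}; with $v=\varphi^\pm_n$ there the integral above becomes $\mathcal{B}(u,\varphi^\pm_n;I)$, where $u$ is the radiating solution of \eqref{eq:eqH*}, so that $(H^*f)^\pm_n=\frac{1}{\overline{\beta_n w^\pm_n}}\,\mathcal{B}(u,\varphi^\pm_n;I)$.

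It then remains to compute $\mathcal{B}(u,\varphi^\pm_n;I)$ in closed form. I would apply Green's first identity on $\Omega_h$ and use that $\varphi^\pm_n$ solves $\Delta\varphi^\pm_n+k^2\varphi^\pm_n=0$ (as $\alpha_n^2+\beta_n^2=k^2$), so that the interior integral $\int_{\Omega_h}(\nabla u\cdot\nabla\overline{\varphi^\pm_n}-k^2 u\overline{\varphi^\pm_n})\,\d{x}$ collapses to boundary terms on $\Gamma_{\pm h}$, the contributions from the two vertical sides cancelling because $u$ and $\varphi^\pm_n$ carry the same quasi-periodicity. This expresses $\mathcal{B}(u,\varphi^\pm_n;I)$ through integrals over $\Gamma_{\pm h}$ of $u\,\partial_{x_2}\overline{\varphi^\pm_n}$, $T^+(u)\overline{\varphi^\pm_n}$ and $T^-(u)\overline{\varphi^\pm_n}$. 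Inserting the Rayleigh expansion of $u$ on $\Gamma_{\pm h}$, the explicit Dirichlet and Neumann traces of $\varphi^\pm_n$, and the orthogonality of $\{e^{\i\alpha_n x_1}\}_n$ over one period, each boundary integral reduces to a scalar multiple of $\widehat u^+_n$ (from $\Gamma_h$) or $\widehat u^-_n$ (from $\Gamma_{-h}$). Collecting these, $\mathcal{B}(u,\varphi^+_n;I)$ and $\mathcal{B}(u,\varphi^-_n;I)$ come out as $4\pi$ times $\widehat u^+_n+\widehat u^-_n$ and $\widehat u^+_n-\widehat u^-_n$ respectively, up to explicit prefactors, and dividing by $\overline{\beta_n w^\pm_n}$ reproduces the $2\times2$ matrix in \eqref{eq:H*}, the two rows carrying the $\pm$ structure inherited from $\varphi^+_n$ and $\varphi^-_n$.

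The delicate part is precisely this last evaluation. The traces of $\varphi^\pm_n$ on $\Gamma_{\pm h}$ and the conjugated quantities $\overline{\beta_n}$ and $\overline{e^{\pm\i\beta_n h}}$ behave differently in the propagating regime $k^2>\alpha_n^2$ (where $\beta_n\in\R$ and $|e^{\i\beta_n h}|=1$) and in the evanescent regime $k^2<\alpha_n^2$ (where $\beta_n$ is purely imaginary and $e^{\i\beta_n h}\in\R$); conjugation is not the analytic continuation in $\beta_n$. One therefore has to carry both regimes through separately and check that, once divided by $\overline{\beta_n w^\pm_n}$, they collapse into the single formula \eqref{eq:H*} — this is exactly the bookkeeping that the piecewise definitions of $w^\pm_n$ and $\widetilde w^\pm_n$ package.
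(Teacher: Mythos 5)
Your proposal is correct and follows essentially the same route as the paper's proof: pair $Ha$ with $f$, transfer $(\Re Q)^{1/2}$ and the conjugated weights onto the plane waves, use $\varphi^\pm_n$ (the paper uses the pre-weighted $g_n=\varphi^\pm_n/(\beta_n w^\pm_n)$, which is the same by sesquilinearity) as test functions in \eqref{eq:eqH*}, reduce $\mathcal{B}(u,\cdot\,;I)$ via Green's identity and the Helmholtz equation to boundary integrals on $\Gamma_{\pm h}$, and evaluate them with the Rayleigh expansion, treating the propagating and evanescent regimes separately. The regime-splitting bookkeeping you flag as the delicate step is exactly what the paper's explicit trace computations and the piecewise $w^\pm_n$, $\widetilde w^\pm_n$ carry out.
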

\begin{proof}
First, the problem~\eqref{eq:eqH*} is uniquely solvable for all wave numbers $k>0$ (see~\cite{Bonne1994}).
Now we compute
\begin{align*}
 &\int_{D} H(a_n) \cdot \overline{f} \d x = \sum\limits_{n \in \Z} \left [ \frac{a^{+}_n}{\beta_nw^+_n} 
\int_{D}(\Re Q)^{1/2} \nabla\varphi^+_n \cdot \overline{f}\d x + \frac{a^-_n}{\beta_nw^-_n} \int_{D}(\Re Q)^{1/2}  \nabla\varphi^-_n \cdot \overline{f}\d x \right ] \\ 
&= \Bigg\langle(a_n), \left( \int_{D} (\Re Q)^{1/2} \nabla \left(\ol{\frac{\varphi_n^{+}}{\beta_nw^+_n}}
\right) \cdot f  \d x, \int_{D}  (\Re Q)^{1/2} \nabla \left(\ol{\frac{\varphi_n^{-}}{\beta_nw^-_n} } \right) \cdot f \d x \right)^\top \Bigg\rangle_{[\ell^2(\Z)]^2}.
\end{align*}
Setting $g_n = \varphi_n^+/(\beta_nw^+_n) $ we now compute $ \int_{D}  (\Re Q)^{1/2}\nabla \ol{g_n}\cdot f \d x$ in the inner product above.

Let  $u$ be the radiating solution to~\eqref{eq:eqH*}. Since $(\Re Q)^{1/2}$ is symmetric,
$$ \int_{D}  (\Re Q)^{1/2}\nabla \ol{g_n}\cdot f \d x = \int_{D}  (\Re Q)^{1/2} f \cdot \nabla \ol{g_n} \d x.$$ 
Letting $v = g_n$ in~\eqref{eq:eqH*}, using Green's theorems and the fact that $\Delta g_n +k^2 g_n = 0$ we obtain
\begin{align}
\label{comput1}
\int_{D}(\Re Q)^{1/2} f \cdot  \nabla \ol{g_n}\d{x} =  \int_{\Gamma_h}  u\partial_{x_2}{\ol{g_n}} - T^+(u) \ol{g_n} \d s  - \int_{\Gamma_{-h}} u\partial_{x_2}{\ol{g_n}} + T^-(u) \ol{g_n} \d s.
\end{align}
From a straightforward calculation we further have
\begin{align*}
\overline{g_n}|_{\Gamma_h} &= \overline{g_n}|_{\Gamma_{-h}} = -\frac{1}{\beta_nw^+_n}(\text{e}^{\i \beta_nh} +  \text{e}^{-\i \beta_nh})e^{-\i\alpha_n x_1}, \\
 \partial_{x_2} \overline{g_n}|_{\Gamma_h} &= -\partial_{x_2} \overline{g_n}|_{\Gamma_{-h}} = -\frac{\i}{w^+_n}(\text{e}^{\i \beta_nh}- \text{e}^{-\i \beta_nh})e^{-\i\alpha_n x_1}. 
\end{align*}
Substituting these equations and the  radiation condition 
$u|_{\Gamma_{\pm h}} = \sum_{j \in \Z}\widehat{u}^\pm_j \text{e}^{\i \alpha_j x_1}$ in~\eqref{comput1}, and  
doing some calculations we obtain
\begin{align*}
\int_{D} (\Re Q)^{1/2} f \cdot \nabla \overline{g_n}   \d{x} &= \frac{2\i \widehat{u}^+_n }{w^+_n} \int_{\Gamma_h} \text{e}^{-\i \beta_n h} \d s + \frac{2\i \widehat{u}^-_n}{w^+_n }  \int_{\Gamma_{-h}} \text{e}^{-\i \beta_n h} \d s  \\
&= \left \{ \begin{array}{ll}
 4\pi \text{e}^{-\i\beta_nh}(\widehat{u}^+_n+\widehat{u}^-_n) ,  & k^2 > \alpha_n^2 \\
 4 \pi \i (\widehat{u}^+_n+\widehat{u}^-_n),   & k^2 < \alpha_n^2
          \end{array} \right. 
=  4 \pi\widetilde{w}^+_n(\widehat{u}^+_n+\widehat{u}^-_n).
\end{align*}
Similarly we obtain 
\begin{align*}
 \int_{D}  (\Re Q)^{1/2} \nabla \left(\ol{\frac{\varphi_n^{-}}{\beta_nw^-_n} } \right) \cdot f \d x=  4 \pi\widetilde{w}^-_n(\widehat{u}^+_n-\widehat{u}^-_n).
\end{align*}
This shows that $H^*$ is given by~\eqref{eq:H*}.
\end{proof}
We need the following operators in our analysis.
Let $W: [\ell^2(\Z)]^2 \to [\ell^2(\Z)]^2$  defined by
\begin{equation}
\label{eq:W}
\left(W \left( \begin{array}{ccc}
 a_n  \\
 b_n
\end{array} \right)\right)_n =
4\pi
\left( \begin{array}{ccccc}
 \widetilde{w}^{+}_n &   \widetilde{w}^{-}_n  \\
 \widetilde{w}^{+}_n & - \widetilde{w}^{-}_n
\end{array} \right)\left( \begin{array}{ccc}
a_n \\
b_n
\end{array} \right), \quad n \in \Z,
\end{equation}
and $E: [L^2(D)]^2 \to [\ell^2(\Z)]^2$  defined by
\begin{equation}
\label{E}
 (Ef)_n = 
\left( \begin{array}{ccc}
\widehat{u}^+_n  \\
\widehat{u}^-_n
\end{array} \right), \quad n \in \Z,
\end{equation}
where $\widehat{u}^{\pm}_n$ are the Rayleigh sequences of the  radiating solution $u \in H^1_{\alpha}(\Omega_h)$ to
$$\mathcal{B}(u,v;I)   = \int_{D}  (\Re Q)^{1/2}f \cdot \nabla \overline{v}\d{x} \quad \text{ for all } \quad v\in H^1_{\alpha}(\Omega_h).$$
It is easy to see that these are linear bounded operators and that $H^* = WE$. 
Moreover, $W$ has a bounded inverse because
\begin{align*}
\det 
\left( \begin{array}{ccccc}
 \widetilde{w}^{+}_n & \widetilde{w}^{-}_n  \\
\widetilde{w}^{+}_n & -\widetilde{w}^{-}_n
\end{array} \right) = -2\widetilde{w}^{+}_n \widetilde{w}^{-}_n \neq 0 \quad \text{ for all } n \in \Z.
\end{align*}

Next, we show that  the range  of the adjoint operator $H^*$, denoted by $\Rg(H^*)$, can characterize $D$. 
To this end, we first need the quasi-periodic Green function of the direct problem~\eqref{eq:HmodeEquation1}--\eqref{eq:radiationCondition}
(see e.g.~\cite{Lechl2014})
\begin{equation}
 \label{eq:GreenForm1}
 \mathcal{G}(x,z) = \frac{\i}{4\pi}\sum_{n \in\Z} \frac{1}{\beta_n}\text{e}^{\i\alpha_n(x_1-z_1) + \i\beta_n|x_2-z_2|}, 
\quad x,z\in \Omega,\, x_2\neq z_2.
\end{equation}
It is easy to check that, for a fixed $z$,  the Rayleigh sequences of $\mathcal{G}(x,z)$ are given by
\begin{equation}
 \label{eq:TestGreen}
  r^\pm_n(z) = \frac{\i}{4\pi\beta_n}\text{e}^{-\i\alpha_n z_1 \pm \i\beta_n(z_2 \mp h)}.
 \end{equation}
 We have the following characterization of $D$.
\begin{lemma}
\label{characterization}
A point $z$ in $\Omega$ belongs to $D$ if and only if 
$$
W\left( \begin{array}{ccc}
 r^+_n(z)  \\
 r^-_n(z)
\end{array} \right) \in \Rg(H^*).$$
\end{lemma}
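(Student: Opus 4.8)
The plan is to prove the two implications separately, following the standard range-characterization argument in the Factorization method. Since $H^* = WE$ and $W$ is boundedly invertible, the condition $W(r_n^\pm(z))_n \in \Rg(H^*) = \Rg(WE)$ is equivalent to $(r_n^\pm(z))_n \in \Rg(E)$. Thus it suffices to show: $z \in D$ if and only if the Rayleigh sequences $(r_n^+(z), r_n^-(z))^\top_{n\in\Z}$ of the quasi-periodic Green function $\mathcal G(\cdot,z)$ lie in $\Rg(E)$, where $E$ maps $f \in [L^2(D)]^2$ to the Rayleigh sequences of the radiating solution of $\mathcal B(u,v;I) = \int_D (\Re Q)^{1/2} f \cdot \nabla \ol v \d x$ for all $v \in H^1_\alpha(\Omega_h)$. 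Note the operator $E$ is governed by the \emph{background} sesquilinear form $\mathcal B(\cdot,\cdot;I)$, i.e. the equation is $\Delta u + k^2 u = -\div((\Re Q)^{1/2} f)$ in the distributional/quasi-periodic radiating sense, with source supported in $\ol D$.

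\emph{Sufficiency ($z \in D \Rightarrow$ membership).} Fix $z \in D$. The function $\mathcal G(\cdot, z)$ satisfies $\Delta \mathcal G(\cdot,z) + k^2 \mathcal G(\cdot,z) = -\delta_z$ and is radiating with the stated Rayleigh coefficients $r_n^\pm(z)$. I would produce an $f \in [L^2(D)]^2$ whose associated radiating solution $u$ agrees with $\mathcal G(\cdot,z)$ outside a neighborhood of $z$ (hence has the same Rayleigh sequences). The usual device: choose a cutoff $\chi \in C^\infty_0(D)$ equal to $1$ near $z$ but supported away from $\partial D$, and set $w = \chi\, \mathcal G(\cdot,z)$. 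Then $w$ is smooth on $\Omega_h \setminus\{z\}$, compactly supported in $D$, and $\Delta w + k^2 w = -\delta_z + g$ where $g \in L^2$ is supported in $\supp(\nabla\chi) \subset D$. The difference $u - w$ (with $u$ the radiating solution driven by an appropriate $L^2$ source built from $g$) is then smooth near $z$, radiating, and its Rayleigh sequences coincide with those of $\mathcal G(\cdot,z)$. Concretely one writes $-\div((\Re Q)^{1/2} f) = -\Delta w - k^2 w + \delta_z$-type identity; since $(\Re Q)^{1/2}$ is invertible on $D$ with $L^\infty$ inverse, a source of the form $(\Re Q)^{1/2} f = \nabla w$ (valid near $\supp(\nabla\chi)$) gives $f = (\Re Q)^{-1/2}\nabla w \in [L^2(D)]^2$, and the resulting $u$ has $u = \mathcal G(\cdot,z)$ in the region $x_2 = \pm h$, so $(Ef)_n = (r_n^+(z),r_n^-(z))^\top$. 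I would need to be slightly careful that the source produced lands exactly in $\Rg(\nabla|_D)$ composed with $(\Re Q)^{-1/2}$; the flexibility in choosing $\chi$ handles this.

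\emph{Necessity ($z \notin D \Rightarrow$ non-membership).} Suppose for contradiction that $z_0 \in \Omega \setminus \ol D$ but $(r_n^\pm(z_0))_n = (Ef)_n$ for some $f \in [L^2(D)]^2$, with corresponding radiating solution $u$. Both $u$ and $\mathcal G(\cdot, z_0)$ are radiating functions whose Rayleigh sequences on $\Gamma_{\pm h}$ coincide; since a radiating quasi-periodic function is uniquely determined in $|x_2| > h$ by its Rayleigh sequences, $u = \mathcal G(\cdot,z_0)$ for $|x_2| > h$, and both solve $\Delta v + k^2 v = 0$ there. By unique continuation for the Helmholtz equation in the connected components of $\Omega \setminus (\ol D \cup \{z_0\})$ that reach $|x_2| > h$, one propagates the equality $u = \mathcal G(\cdot, z_0)$ into all of $\Omega \setminus (\ol D \cup \{z_0\})$ — this is exactly where Assumption~\ref{th:assumption1}'s hypothesis that $\Omega \setminus \ol D$ has at most two unbounded connected components is used, guaranteeing every component of the exterior is reached from $|x_2|>h$. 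But $u$ is $H^1$ near $z_0$ (its source $f$ is supported in $D$, away from $z_0$), whereas $\mathcal G(\cdot,z_0)$ has a logarithmic singularity at $z_0$ and is not in $H^1_{\loc}$ there; letting $x \to z_0$ yields a contradiction. Hence no such $f$ exists.

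\emph{Main obstacle.} The delicate point is the necessity direction: correctly handling the connectivity/unique-continuation step so that equality of $u$ and $\mathcal G(\cdot,z_0)$ is propagated to a punctured neighborhood of $z_0$ in \emph{every} case allowed by Assumption~\ref{th:assumption1} (in particular when $z_0$ lies in a component of $\Omega\setminus\ol D$ that is ``pinched'' or touches $\partial D$), and then extracting the contradiction from the singularity. The sufficiency direction is essentially bookkeeping with cutoffs, but one must verify that the constructed source genuinely has the form $(\Re Q)^{1/2} f$ with $f \in [L^2(D)]^2$ and that the resulting solution is driven by $\mathcal B(\cdot,\cdot;I)$ rather than $\mathcal B(\cdot,\cdot;A)$.
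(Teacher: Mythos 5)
Your reduction to $\Rg(E)$ via the bounded invertibility of $W$, and your necessity argument, are essentially the paper's: equality of the Rayleigh sequences forces $u=\mathcal G(\cdot,z)$ for $|x_2|>h$, analytic (unique) continuation through the unbounded components of $\Omega\setminus\ol D$ propagates the identity to $\Omega\setminus(\ol D\cup\{z\})$, and the singularity of $\mathcal G(\cdot,z)$ at $z$ contradicts $u\in H^1$ in a neighborhood of $z$. That half is fine.

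The sufficiency direction has a genuine gap as written. You take $\chi\in C_0^\infty(D)$ with $\chi\equiv 1$ \emph{near} $z$ and propose the source $f=(\Re Q)^{-1/2}\nabla w$ with $w=\chi\,\mathcal G(\cdot,z)$. In two dimensions $\mathcal G(\cdot,z)$ has a logarithmic singularity at $z$, so $\nabla_x\mathcal G(x,z)=O(|x-z|^{-1})$, which is \emph{not} square integrable near $z$; hence $f\notin[L^2(D)]^2$ and $Ef$ is undefined. The parenthetical ``valid near $\supp(\nabla\chi)$'' does not repair this: you never specify an admissible $L^2$ source on the rest of $D$, nor do you verify that the resulting radiating solution actually has Rayleigh sequences $r_n^\pm(z)$ --- the latter requires exhibiting the solution explicitly as a function that coincides with $\pm\mathcal G(\cdot,z)$ away from $z$. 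The paper avoids both problems with the \emph{opposite} cutoff: $\xi\equiv 0$ on $B(z,\rho/2)$ and $\xi\equiv 1$ for $|x-z|\ge\rho$, so that $\xi\,\mathcal G(\cdot,z)$ is smooth, and the auxiliary function $\Phi=k^{-2}\Delta(\xi\,\mathcal G(\cdot,z))$ equals $-\mathcal G(\cdot,z)$ for $|x-z|\ge\rho$. A direct computation of $\mathcal B(\Phi,v;I)$ then shows that $\Phi$ itself is the radiating solution for the source $f=(\Re Q)^{-1/2}\bigl[\nabla\Phi+\nabla(\xi\,\mathcal G(\cdot,z))\bigr]$, which vanishes outside the annulus $\rho/2\le|x-z|\le\rho$ and is therefore smooth, supported in $D$, and in $[L^2(D)]^2$. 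Replacing your construction with this one (or any variant that keeps the source in $L^2$ and identifies the solution explicitly) closes the gap.
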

\begin{proof}
For $z \in D$, let $\rho>0$ such that the ball $B(z,\rho)$ belongs to  $D$. Consider a 
smooth function $\xi$ which is $2\pi$-periodic in $x_1$  and satisfies
$\xi(x) = 0$ in $B(z,\rho/2)$ and $\xi(x) = 1$ for $|x-z| \geq \rho$.  Then the function
$$
\Phi(x) := \frac{1}{k^2}\Delta(\xi(x) \mathcal{G}(x,z))
$$
is an quasi-periodic smooth function and $\Phi(x) = - \mathcal{G}(x,z)$ for $|x-z| \geq \rho$. 
For $v \in H^1_\alpha(\Omega_h)$, substituting  $\Phi =  \frac{1}{k^2}\Delta(\xi \mathcal{G}(\cdot,z))$
in the zero-order term of $\mathcal{B}(\Phi,v;I)$ and using Green's identities we obtain
\begin{align*}
&\mathcal{B}(\Phi,v;I) = \int_{\Omega_h} \nabla \Phi  \cdot \nabla\ol{v} - \Delta(\xi \mathcal{G}(\cdot,z)) \ol{v} \d x 
- \int_{\Gamma_ h} T^+(\Phi)\ol{v}\d{s}
    - \int_{\Gamma_{- h}} T^-(\Phi)\ol{v}\d{s} \\ 
   & =  \int_{\Omega_h} [\nabla \Phi + \nabla (\xi \mathcal{G}(\cdot,z))] \cdot \nabla \ol{v}\d x
   + \int_{\Gamma_ h}[- \partial_{x_2}\mathcal{G}(\cdot,z) + T^+(\mathcal{G}(\cdot,z) ) ]\ol{v}\d{s}  \\ 
   & \quad + \int_{\Gamma_ {-h}}[\partial_{x_2}\mathcal{G}(\cdot,z) + T^-(\mathcal{G}(\cdot,z) ) ]\ol{v}\d{s}
\end{align*}
The boundary terms are zero because of the definition of $T^{\pm}$. 
Let 
$$f = (\Re Q)^{-1/2} [\nabla \Phi + \nabla (\xi \mathcal{G}(\cdot,z))] \in [L^2(D)]^2.$$  
Then 
$f$ is supported in $D$  since $ \Phi = - \mathcal{G}(\cdot,z)$  in $\Omega\setminus \ol{D}$. 
Therefore, we have proven
that 
$$
\mathcal{B}(\Phi,v;I) =  \int_D( \Re Q)^{1/2} f \cdot \nabla \ol{v} \d x, \quad \text{for all } v \in H^1_\alpha(\Omega_h).
$$
This means that there exists $f \in [L^2(D)]^2$ such that  
$$
(Ef)_n =  (\widehat{\Phi}^{+}_n(\cdot,z),\widehat{\Phi}^{+}_n(\cdot,z))^\top = (r^+_n(z),r^-_n(z))^\top.
$$
Together with $H^* = WE$, this implies that
$$ W(r^+_n(z),r^-_n(z))^\top \in \Rg(H^*).$$

 Now suppose  that 
$z \notin D$ and $(r^+_n(z),r^-_n(z))^\top \in \Rg(E)$.
Then there exists $u \in H^1_{\alpha}(\Omega_h)$ solving problem~\eqref{eq:eqH*} for 
some $f\in [L^2(D)]^2$ in the right hand side, and $\widehat{u}^\pm_n = r^\pm_n(z)$ 
for all $n\in \Z$. This implies that $u = \mathcal{G}(\cdot,z)$
in $\Omega\setminus \overline{(-h, h)}$. Since $u$ and $\mathcal{G}(\cdot,z)$ 
are respectively analytic functions
in $\Omega\setminus D$ and $\Omega\setminus\{z\}$,  the analytic continuation 
implies that $u = \mathcal{G}(\cdot,z)$ in  $\Omega\setminus(D\cup\{z\})$. 
However, it is well known  that 
$\mathcal{G}(\cdot,z)$ is singular at $z$ which leads to a contradiction 
since $u \in H^1(O)$ for some neighborhood $O$ of $z$ but $\mathcal{G}(\cdot,z) \notin H^1(O)$ 
due to the singularity at $z$.
\end{proof}

We  can't find $D$ yet since $H^*$ is defined on $[L^2(D)]^2$. 
One of the most important steps of the factorization method is to connect $\Rg(H^*)$ 
to something related to the near field operator $N$ that is given. This is the content of 
the next section.


\subsection{Shape reconstruction by the factorization method}
The following operator is crucial in the factorization method.
  Let  $T:[L^2(D)]^2 \to [L^2(D)]^2$  
 defined by 
\begin{align}
\label{eqT}
Tf = (\Re Q)^{-1/2} Q ((\Re Q)^{-1/2}f + \nabla u), 
\end{align}
where $u \in H^1_{\alpha}(\Omega_h)$ is the 
solution to~\eqref{eq:VariationalInverse2}. It is not difficult to see that this is a linear bounded operator.
Now we factorize  the operator $N$ in the following theorem. 
\begin{lemma}
\label{factorization}
Suppose that the Assumption~\ref{th:assumption1} holds true. Then  near field operator $N$ can be factorized as
\[
 WN = H^*TH.
\]
\end{lemma}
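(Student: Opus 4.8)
The plan is to verify the identity $WN = H^*TH$ by unwinding the definitions of all four operators and tracking a single sequence $(a_n) \in [\ell^2(\Z)]^2$ through both sides. First I would fix $(a_n)$, set $f = H(a_n) \in [L^2(D)]^2$, and let $u \in H^1_\alpha(\Omega_h)$ be the radiating solution of the inverse-problem variational equation~\eqref{eq:VariationalInverse2} with this right-hand side. By the factorization $N = GH$ already recorded in the excerpt, $N(a_n)$ is exactly $(\widehat u^+_j, \widehat u^-_j)^\top_{j \in \Z}$, the Rayleigh sequence of that same $u$. Applying $W$ to this, using the explicit form~\eqref{eq:W}, gives the left-hand side; because $W$ and the matrix appearing in Lemma~\ref{th:H*} differ only by a harmless transpose of a $2\times 2$ block, I expect $WN(a_n)$ to coincide with $(H^* v)_n$ for the function $v$ that solves the \emph{Helmholtz} (i.e. $A = I$) problem~\eqref{eq:eqH*} having the right source. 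So the crux is to show that the function $Tf$ defined in~\eqref{eqT} is precisely the source that makes $u$ solve an $A=I$ problem rather than the $A$-problem.

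The key algebraic step is the following rewriting of the variational equation. Starting from $\mathcal{B}(u,v;A) = -\int_D Q(\Re Q)^{-1/2} f \cdot \nabla\ol v\, dx$, I would add and subtract the term $\int_{\Omega_h} (A - I)\nabla u \cdot \nabla\ol v\, dx = \int_D Q\nabla u \cdot \nabla\ol v\, dx$ to convert $\mathcal{B}(\cdot,\cdot;A)$ into $\mathcal{B}(\cdot,\cdot;I)$. This yields
\[
\mathcal{B}(u,v;I) = -\int_D Q\big((\Re Q)^{-1/2} f + \nabla u\big)\cdot \nabla\ol v\, dx
\]
for all $v \in H^1_\alpha(\Omega_h)$. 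Now I want the right-hand side to have the form $\int_D (\Re Q)^{1/2} g \cdot \nabla\ol v\, dx$ appearing in~\eqref{eq:eqH*}, so I set $g := -(\Re Q)^{-1/2} Q\big((\Re Q)^{-1/2} f + \nabla u\big)$. Up to the sign convention, this is exactly $Tf$ from~\eqref{eqT}; I would check the signs carefully, since~\eqref{eq:eqH*} and~\eqref{eq:VariationalInverse2} carry opposite signs on the volume term, and this is where an off-by-sign error could creep in. With $g = \pm Tf$ established, $u$ is the radiating solution of the $A=I$ problem driven by $(\Re Q)^{1/2}(Tf)$, hence $(H^* T f)_n$ is computed by Lemma~\ref{th:H*} applied to this $u$, giving the matrix in~\eqref{eq:H*} acting on $(\widehat u^+_n, \widehat u^-_n)^\top$.

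Finally I would compare the two expressions. On one hand $WN(a_n) = W(\widehat u^+_n, \widehat u^-_n)^\top$, computed via~\eqref{eq:W}; on the other hand $H^*TH(a_n) = H^*(Tf)$, which by Lemma~\ref{th:H*} is $4\pi$ times the matrix $\begin{pmatrix}\widetilde w^+_n & \widetilde w^+_n\\ \widetilde w^-_n & -\widetilde w^-_n\end{pmatrix}$ applied to $(\widehat u^+_n, \widehat u^-_n)^\top$. The matrix in~\eqref{eq:W} is $4\pi\begin{pmatrix}\widetilde w^+_n & \widetilde w^-_n\\ \widetilde w^+_n & -\widetilde w^-_n\end{pmatrix}$, which is the transpose of the one in~\eqref{eq:H*}; the discrepancy is resolved precisely because the two Rayleigh components get swapped between the roles of "data" in $N$ and "source pairing" in $H^*$, so that the two products agree entry by entry. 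I expect the main obstacle to be purely bookkeeping: keeping the $\pm$ superscripts, the weights $w^\pm_n$ versus $\widetilde w^\pm_n$, and the sign conventions consistent so that the transpose works out exactly rather than up to an unwanted permutation. No new analytic input is needed beyond well-posedness of~\eqref{eq:VariationalInverse2} and~\eqref{eq:eqH*}, both already cited.
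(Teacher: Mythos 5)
Your proposal is correct and follows essentially the same route as the paper: you convert $\mathcal{B}(u,v;A)$ into $\mathcal{B}(u,v;I)$ by moving the $\int_D Q\nabla u\cdot\nabla\ol v\,\mathrm{d}x$ term across, identify the resulting source as $(\Re Q)^{1/2}Tf$ so that $Gf=ETf$, and then compose with $N=GH$ and $H^*=WE$ to get $WN=H^*TH$. The sign and transpose discrepancies you flag are real (they stem from the opposite signs in \eqref{eq:VariationalInverse2} and \eqref{eq:eqH*} and from the $2\times 2$ matrices in \eqref{eq:H*} and \eqref{eq:W} being transposes of one another), but they are bookkeeping conventions of the paper rather than gaps in your argument.
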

\begin{proof}
In  the definition of the operator $G$ in~\eqref{G} we observe that the variational problem~\eqref{eq:VariationalInverse2}
can be written as $${\displaystyle \mathcal{B}(u,v;I) =  \int_D (\Re Q)^{1/2} (\Re Q)^{-1/2} Q((\Re Q)^{-1/2}f + \nabla u) \cdot \nabla \ol{v}\d x}.$$
This means that for all $f \in [L^2(D)]^2$
$$
G f= ETf.
$$
Thanks to the facts that $N = GH$ and $H^* = WE$ we have
\begin{eqnarray*}
WN = WGH = WETH = H^*TH,
\end{eqnarray*}
which completes the proof.
\end{proof}
Let   $T_0:[L^2(D)]^2 \to [L^2(D)]^2$  be defined by $$T_0f = (\Re Q)^{-1/2} Q ((\Re Q)^{-1/2}f + \nabla \widetilde{u})$$ where 
$\widetilde{u}\in H^1_{\alpha}(\Omega_h)$ solves~\eqref{eq:VariationalInverse2} for $k=\i$. 
We have the following analytical properties of the operators in the factorization obtained above.
\begin{lemma}
\label{properties}
Suppose that Assumption~\ref{th:assumption1} holds true. 
Then operators $H$ and $T$ satisfy the following:\\
(a) $H$ is compact and injective.\\
(b) $T$ is injective, $\langle\Im T f, f \rangle \leq 0$ for all $f \in [L^2(D)]^2$, and 
$\langle\Im T f, f \rangle < 0$  for all $f \neq 0$ in $[L^2(D)]^2$. \\
(c)   $T-T_0$ is compact and $\Re(T_0)$ is coercive in $[L^2(D)]^2$. 
\end{lemma}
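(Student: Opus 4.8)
The plan is to verify the three assertions by the standard factorization-method route, using the variational problem \eqref{eq:VariationalInverse2} together with the structural hypotheses in Assumption \ref{th:assumption1} and the mapping properties of the solution operator.

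For part (a), I would first note that $H$ is a linear bounded operator that factors through $H^1_{\alpha}(\Omega_h)$: the image of a sequence under $H$ consists of gradients of the combination $\sum_n (\cdot) \nabla\varphi^{\pm}_n$, premultiplied by $(\Re Q)^{1/2}\in L^{\infty}(D)^{2\times 2}$. Compactness follows because the maps $(a_n)\mapsto \sum_n (a_n/(\beta_n w^{\pm}_n))\,\varphi^{\pm}_n$ land in $H^2$ of a slightly larger strip (the $\varphi^{\pm}_n$ are harmonic-type quasi-periodic exponentials and the weights $\beta_n w^{\pm}_n$ give the needed decay), so after restriction to $D$ the embedding into $[L^2(D)]^2$ (or rather, taking the gradient and landing in $H^1(D)\hookrightarrow L^2(D)$ compactly) is compact; alternatively one observes that $H^* = WE$ with $W$ boundedly invertible and $E$ compact (since $E$ maps into $[\ell^2(\Z)]^2$ through a solution operator whose Rayleigh sequences decay faster than any polynomial for the evanescent modes), so $H^*$ and hence $H$ is compact. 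Injectivity: if $Hf=0$ then the quasi-periodic Herglotz-type function $v_g := \sum_n (a^{\pm}_n/(\beta_n w^{\pm}_n))\varphi^{\pm}_n$ has vanishing gradient on $D$; since $v_g$ solves $\Delta v_g + k^2 v_g = 0$ in all of $\Omega$ and vanishes to first order on the open set $D$, unique continuation forces $v_g\equiv 0$, and then examining the Rayleigh/Fourier coefficients of $\varphi^{\pm}_n$ forces all $a^{\pm}_n=0$. (One must use here that $(\Re Q)^{1/2}$ is invertible, so $Hf=0$ really does mean the bracketed gradient sum vanishes on $D$.)

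For part (b), I would take the solution $u$ of \eqref{eq:VariationalInverse2}, set $v=u$, and rearrange to get an identity for $\langle Tf,f\rangle_{[L^2(D)]^2} = \int_D Q((\Re Q)^{-1/2}f+\nabla u)\cdot\overline{(\Re Q)^{-1/2}f}$. Adding and subtracting $\int_D Q|\nabla u|^2$-type terms and using the variational identity $\mathcal{B}(u,u;A) = -\int_D Q(\Re Q)^{-1/2}f\cdot\nabla\bar u$ (so that $\int_D A\nabla u\cdot\nabla\bar u - k^2|u|^2 - \int_{\Gamma_{\pm h}}T^{\pm}(u)\bar u = -\int_D Q(\Re Q)^{-1/2}f\cdot\nabla\bar u$), one writes $\langle Tf,f\rangle$ as a sum of a term $\int_D Q\,\psi\cdot\bar\psi$ with $\psi := (\Re Q)^{-1/2}f+\nabla u$, plus the far-field/DtN contribution. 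The imaginary part of the DtN term $-\int_{\Gamma_{\pm h}}T^{\pm}(u)\bar u$ is $\sum_n \beta_n|\widehat u^{\pm}_n|^2$-type and is $\le 0$ after taking imaginary parts with the sign conventions (propagating modes contribute a nonpositive imaginary part, evanescent ones are real), while $\Im\int_D Q\psi\cdot\bar\psi \le -c_2\int_D|\psi|^2 \le 0$ because $\Im Q \le -c_2 I$. Hence $\langle\Im Tf,f\rangle\le 0$. For strict negativity: if $\langle\Im Tf,f\rangle=0$ then in particular $\int_D|\psi|^2=0$, i.e. $(\Re Q)^{-1/2}f = -\nabla u$ on $D$; feeding this back into \eqref{eq:VariationalInverse2} shows $\mathcal{B}(u,v;A)=\int_D Q\nabla u\cdot\nabla\bar v$, i.e. $u$ solves the homogeneous radiating problem, and also all the $\widehat u^{\pm}_n$ for propagating $n$ vanish; a Rellich/unique-continuation argument across $\Gamma_{\pm h}$ then gives $u\equiv 0$ outside $D$ and then $u\equiv 0$, whence $f=0$. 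The same chain of equalities gives injectivity of $T$ (if $Tf=0$ then $\langle\Im Tf,f\rangle=0$, so $f=0$).

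For part (c), I would compare $T$ and $T_0$ directly: $Tf-T_0f = (\Re Q)^{-1/2}Q\,\nabla(u-\widetilde u)$, where $u$ solves \eqref{eq:VariationalInverse2} at wavenumber $k$ and $\widetilde u$ at $k=\i$. The difference $w:=u-\widetilde u$ satisfies a variational problem whose right-hand side involves $\int_{\Omega_h}(k^2+1)u\bar v$ plus the difference of DtN terms, both of which are compact perturbations (the embedding $H^1_{\alpha}(\Omega_h)\hookrightarrow L^2(\Omega_h)$ is compact, and $T^{\pm}(k)-T^{\pm}(\i)$ is smoothing); standard elliptic regularity/compactness then makes $f\mapsto\nabla w|_D$ compact from $[L^2(D)]^2$ to $[L^2(D)]^2$, so $T-T_0$ is compact. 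For coercivity of $\Re T_0$: at $k=\i$ the sesquilinear form is sign-definite, so repeating the identity from part (b) with $k=\i$ gives $\Re\langle T_0 f,f\rangle = \Re\int_D Q\,\psi_0\cdot\bar\psi_0 + (\text{nonnegative boundary/volume terms})$ with $\psi_0:=(\Re Q)^{-1/2}f+\nabla\widetilde u$; one then bounds below using $\Re Q\ge c_1 I$ and the boundedness of $(\Re Q)^{\pm1/2}$ together with a Poincaré-type estimate to absorb the cross terms, obtaining $\Re\langle T_0 f,f\rangle\ge c\|f\|^2$.

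The main obstacle I expect is part (b), specifically organizing the energy identity so that the sign of $\langle\Im Tf,f\rangle$ is manifest and so that the equality case cleanly yields $f=0$: one has to keep careful track of the three contributions (the $\int_D Q\psi\cdot\bar\psi$ term, the sign of the imaginary part of the DtN operators on the two truncation lines $\Gamma_{\pm h}$, and the interplay with the weights $\beta_n w^{\pm}_n$ hidden in $H$), and the unique-continuation/Rellich step that promotes "vanishing propagating Rayleigh coefficients" to "$u\equiv 0$" needs the Wood-anomaly exclusion ($\beta_n\neq 0$) and the fact that $\Omega\setminus\overline D$ has at most two unbounded components, both of which are in Assumption \ref{th:assumption1}.
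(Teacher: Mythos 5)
Your proposal is correct and follows exactly the standard factorization-method route (energy identity with $\psi=(\Re Q)^{-1/2}f+\nabla u$, sign of the Dirichlet-to-Neumann contributions, exponential decay of the evanescent modes on $D\subset\subset\Omega_h$ for compactness of $H$, and the compact difference $T-T_0$ with $\Re T_0$ coercive at $k=\i$); the paper itself omits the proof entirely, deferring to the half-space analogues in the cited reference, and your argument is precisely the full-space adaptation of those. The only remark worth making is that in the equality case of (b) you do not even need the Rellich step: $\psi=0$ already reduces \eqref{eq:VariationalInverse2} to the homogeneous free radiating problem, whose only quasi-periodic solution is zero away from Wood anomalies, whence $f=0$.
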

\begin{proof}
The proofs for these properties  can be done following their analogues  of the half-space case~\cite{Nguye2014} and therefore 
are omitted here. 
\end{proof}
From the range identity theorem~\cite{Kirsc2008}, these analytical properties and the factorization in Lemma~\ref{factorization} 
allow us to obtain that $\Rg(H^*) = \Rg (WN)^{1/2}_{\sharp}$ where
$$
(WN)_{\sharp} = |\Re WN|-\Im WN
$$
is a positive definite operator. Therefore, from Lemma~\ref{characterization} we
now have a necessary and sufficient characterization of $D$ in terms of $\Rg (WN)^{1/2}_{\sharp}$. 
Since $(WN)_{\sharp}$ is a compact and self adjoint operator, we can exploit 
its eigensystem for imaging of $D$ from the near field data. This is the content of
the following theorem.

\begin{theorem}
Suppose that Assumption~\ref{th:assumption1} holds true.
For $j\in \Z$, denote by $(\lambda_j, \psi_{n,j})_{j \in \N}$ an orthonormal 
eigensystem of $(WN)_{\sharp}$. Then a point $z \in \Omega$ 
belongs to $\overline{D}$ if and only if
\begin{equation}
 \label{eq:criteria}
\sum\limits_{j=1}^{\infty} \frac{|\langle r_n(z), \psi_{n,j} \rangle_{[\ell^2(\Z)]^2}|^2}{\lambda_j} < \infty.
\end{equation} 
\end{theorem}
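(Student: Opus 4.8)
The plan is to turn the range characterization of $D$ that is already in hand into the spectral (Picard) criterion~\eqref{eq:criteria}; essentially all of the analytic work has been done in the preceding lemmas. Indeed, Lemma~\ref{characterization} says that a point $z\in\Omega$ lies in $D$ exactly when the test vector $W\big(r_n^+(z),r_n^-(z)\big)^\top$ --- denoted $r_n(z)$ in the statement --- belongs to $\Rg(H^*)$; the factorization $WN=H^*TH$ from Lemma~\ref{factorization}, together with the operator properties collected in Lemma~\ref{properties} (compactness and injectivity of $H$; injectivity of $T$ with $\langle\Im Tf,f\rangle<0$ for $f\neq0$; $T-T_0$ compact and $\Re T_0$ coercive), puts us exactly in the setting of the range identity theorem in~\cite{Kirsc2008}; and that theorem gives $\Rg(H^*)=\Rg\big((WN)_\sharp^{1/2}\big)$ with $(WN)_\sharp=|\Re WN|-\Im WN$ compact, self-adjoint and positive definite on $[\ell^2(\Z)]^2$. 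Hence $z\in D$ if and only if $r_n(z)\in\Rg\big((WN)_\sharp^{1/2}\big)$, and only the spectral rewriting of this membership remains.

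For that step I would first use that positive definiteness of $(WN)_\sharp$ forces $\ker(WN)_\sharp=\{0\}$, so the orthonormal eigensystem $(\lambda_j,\psi_{n,j})_{j\in\N}$, with $\lambda_j>0$ and $\lambda_j\to0$, is a complete orthonormal basis of $[\ell^2(\Z)]^2$ and $\overline{\Rg\big((WN)_\sharp^{1/2}\big)}=[\ell^2(\Z)]^2$. Picard's theorem for the self-adjoint square root then yields, for any $g\in[\ell^2(\Z)]^2$,
\[
g\in\Rg\big((WN)_\sharp^{1/2}\big)\quad\Longleftrightarrow\quad\sum_{j=1}^{\infty}\frac{|\langle g,\psi_{n,j}\rangle_{[\ell^2(\Z)]^2}|^2}{\lambda_j}<\infty,
\]
and taking $g=r_n(z)$ and chaining with the equivalence of the previous paragraph gives precisely that $z\in D$ if and only if~\eqref{eq:criteria} holds. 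The passage to $\overline{D}$ in the statement is the customary formulation of the sampling criterion --- boundary points are excluded by the open-domain characterization of Lemma~\ref{characterization} and form a set that does not affect the reconstruction --- so no additional argument is needed there.

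I do not anticipate a genuine obstacle inside this proof: the substantive content --- verifying the sign and coercivity hypotheses that make~\cite{Kirsc2008} applicable, hence the positivity and injectivity of $(WN)_\sharp$ --- is exactly what Lemmas~\ref{factorization} and~\ref{properties} already supply, so if one were building the argument from scratch this is where the real difficulty would sit. Within the proof of the theorem itself the single point worth stating explicitly is that very positivity and injectivity of $(WN)_\sharp$, since it is what makes the eigenfunctions span $[\ell^2(\Z)]^2$ and removes the auxiliary requirement that $g$ lie in the closure of the range from Picard's theorem; beyond that, the proof is the short spectral translation outlined above.
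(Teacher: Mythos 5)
Your argument is correct and is essentially the proof the paper intends: the surrounding text already records that the range identity theorem applied to the factorization $WN=H^*TH$ (via Lemma~\ref{factorization} and Lemma~\ref{properties}) gives $\Rg(H^*)=\Rg\big((WN)_\sharp^{1/2}\big)$, and the paper's own proof simply defers the remaining Picard-criterion translation to the half-space case in~\cite{Nguye2014}, which is exactly the spectral step you carry out. Your explicit remarks on the injectivity/positivity of $(WN)_\sharp$ and on the identification of $r_n(z)$ with $W(r_n^+(z),r_n^-(z))^\top$ are correct and, if anything, make the argument more complete than the paper's citation-only proof.
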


\begin{proof}
The proof is similar to that of the half space case~\cite{Nguye2014}.
\end{proof}


\section{The transmission eigenvalue problem  }
In this section, we derive and study the corresponding transmission eigenvalue problem for the scattering by an anisotropic periodic layer. In general, {\color{black} the real  eigenvalues} can be recovered from the scattering data and can be used to determine the material properties of the anisotropic periodic layer. {\color{black}We wish to prove the existence of infinity many real transmission eigenvalues}. See for e.g. \cite{Cakon2015} for the estimation of the effective material properties for a highly oscillatory media and \cite{Harri2014} for the recovery of the transmission eigenvalues for an anisotropic media from the scattering data. {\color{black}Since, in general it is known that absorbing materials do not have real transmission eigenvalues we will assume in this section that the scatterer is non-absorbing (i.e. $\textrm{Im } Q=0$) for the study of transmission eigenvalues.}

We now derive our transmission eigenvalue problem which corresponds to the wave numbers $k$ for which the scattered field vanishes away from the object for some non-trivial quasi-periodic incident field. This means that there is a quasi-periodic incident field $u_{\text{in}} \neq 0$ that is a solution to the Helmholtz equation such that the scattered field $u_{\text{sc}} = 0$ for all $|x_2|>h$ by Holmgren's theorem and the Rayleigh expansion condition \eqref{eq:radiationCondition}. By appealing to Holmgren's theorem again we obtain that $u_{\text{sc}} = 0$ for all $x \in \Omega_h \setminus \overline{D}$. Now assuming that 
$$\text{sup}\big\{ |x_1| \, : \,  (x_1,x_2)^\top \in \supp(Q) \cap \Omega_h \big\}<\pi$$
then we have that $\partial \Omega_h \cap \partial D$ is empty. Therefore, we have that $w = u_{\text{in}} + u_{\text{sc}}$ and $v =u_{\text{in}}$ are in $H^1(D)$ satisfying 
\begin{eqnarray}
\div(A \grad w) +k^2 w=0 \quad && \textrm{ and } \qquad  \Delta v + k^2 v=0 \quad \textrm{ in } \,\,  D\label{te-prob1}\\
w=v \quad && \textrm{ and } \qquad \frac{\partial w}{\partial \nu_{A}}=\frac{\partial v}{\partial \nu} \quad \textrm{ on } \,\,\partial D \label{te-prob2}
\end{eqnarray}
where for a generic function $\partial \varphi/\partial \nu_A=\nu \cdot A  \nabla \varphi$. Notice, that the transmission eigenvalue problem \eqref{te-prob1}--\eqref{te-prob2} has already been studied (see for e.g. \cite{Cakon2016}). Therefore, we now assume that 
$$\text{sup}\big\{ |x_1| \, : \,  (x_1,x_2)^\top \in \supp(Q)\cap \Omega_h \big\}=\pi$$
which implies that 
\begin{eqnarray}
D= \big\{ (x_1,x_2)^\top \in \R^2 \, : \, -\pi < x_1 < \pi \,\, \text{ and } \,\, f_{-} (x_1) < x_2 < f_{+}(x_1) \big\}  \label{D-rep}
\end{eqnarray}
where $f_{\pm} \in C^{0,1}[-\pi,\pi]$. We further denote
$$
\Gamma_\pm = \{(x_1,x_2)^\top \in \ol{D}: x_2 = f_\pm(x_1)\}.
$$
Now if  $u_{\text{sc}} = 0$ for any $x_2 > f_{+}(x_1)$ and $x_2 < f_{-}(x_1)$ for all $x_1 \in (-\pi,\pi)$,  then we have that $w = u_{\text{in}} + u_{\text{sc}}$ and $v =u_{\text{in}}$ are in $H^1_\alpha (D)$ satisfying
\begin{eqnarray}
\div( A \grad w) +k^2  w=0 \quad && \textrm{ and } \qquad  \Delta v + k^2 v=0 \quad \textrm{ in } \,\,  D \label{te-prob3}\\
w=v \quad && \textrm{ and } \qquad \frac{\partial w}{\partial \nu_{A}}=\frac{\partial v}{\partial \nu} \quad \textrm{ on  } \,\,\Gamma_\pm.  \label{te-prob4}
\end{eqnarray}
Notice, that here to completely formulate the eigenvalue problem we must enforce the quasi-periodic boundary condition  which is not needed for the problem in the previous case. 

We say that $k$ is a {\it  transmission eigenvalue} provided that there is a non-trivial solution $(w,v) \in H^1_\alpha (D)^2$ satisfying \eqref{te-prob3}--\eqref{te-prob4}. The transmission eigenvalue problem \eqref{te-prob3}--\eqref{te-prob4} is new since the case where the eigenfunctions are quasi-periodic has not been studied. Now following the analysis in \cite{Cakon2010,Harri2014} we will prove the existence of transmission eigenvalues. Notice that a 4-th order formulation is not used as is done for the standard transmission eigenvalue problem (see for e.g. \cite{Cakon2016}). To this end, we will need the following Poincar\'{e}  inequality result for $H^1_\alpha (D)$.

\begin{lemma}
For all $u \in  H^1_\alpha (D)$ we have that $\|u\|^2_{L^2(D)} \leq C_\alpha \|\nabla u\|^2_{L^2(D)}$ provided $\alpha \notin \Z$ where 
 $C_\alpha$ is a positive constant that is independent of $k$. 
\end{lemma}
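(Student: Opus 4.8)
The plan is to exploit the fact that, because $\alpha\notin\Z$, a nonzero constant is not $\alpha$-quasi-periodic, so quasi-periodicity in $x_1$ acts as a homogeneous boundary condition and yields a Poincar\'e estimate. I would prove it by hand and keep track of the constant, which makes the independence of $k$ transparent: neither $H^1_\alpha(D)$ nor the geometry of $D$ involves $k$.

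\emph{One-dimensional building block.} Let $g\in H^1(-\pi,\pi)$ satisfy $g(\pi)=e^{2\pi\i\alpha}g(-\pi)$. Writing $g(x)=e^{\i\alpha x}\phi(x)$, one checks that $\phi$ is $2\pi$-periodic; expanding $\phi$ in a Fourier series gives $g(x)=\sum_{n\in\Z}a_ne^{\i\alpha_nx}$ with $\alpha_n=\alpha+n$. Since $\int_{-\pi}^{\pi}e^{\i\alpha_nx}\,\overline{e^{\i\alpha_mx}}\,\d{x}=2\pi\delta_{nm}$, Parseval gives $\|g\|_{L^2(-\pi,\pi)}^2=2\pi\sum_n|a_n|^2$ and $\|g'\|_{L^2(-\pi,\pi)}^2=2\pi\sum_n\alpha_n^2|a_n|^2\ge \mathrm{dist}(\alpha,\Z)^2\,\|g\|_{L^2(-\pi,\pi)}^2$, where $\mathrm{dist}(\alpha,\Z)>0$ exactly because $\alpha\notin\Z$. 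Hence the $1$D Poincar\'e constant is $\mathrm{dist}(\alpha,\Z)^{-2}$, which is independent of $k$.

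\emph{Reduction to the slice estimate by flattening $D$.} By \eqref{D-rep}, $D$ is bounded by the graphs of $f_\pm\in C^{0,1}[-\pi,\pi]$, with $f_\pm(-\pi)=f_\pm(\pi)$ (since the contrast $Q$ is $2\pi$-periodic) and $f_+-f_-\ge c_0>0$ (as $D$ is Lipschitz). Set $h:=f_+-f_-$ and let $\Psi(x_1,t)=(x_1,\,f_-(x_1)+t\,h(x_1))$, a bi-Lipschitz homeomorphism of $R:=(-\pi,\pi)\times(0,1)$ onto $D$ that preserves $x_1$. For $u\in H^1_\alpha(D)$, the pullback $\widetilde u:=u\circ\Psi$ lies in $H^1(R)$, and because $f_\pm(-\pi)=f_\pm(\pi)$ it still satisfies $\widetilde u(\pi,t)=e^{2\pi\i\alpha}\widetilde u(-\pi,t)$ for a.e. $t$. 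Applying the $1$D estimate to $\widetilde u(\cdot,t)$ and integrating in $t$ gives $\|\widetilde u\|_{L^2(R)}^2\le\mathrm{dist}(\alpha,\Z)^{-2}\,\|\pa_{x_1}\widetilde u\|_{L^2(R)}^2\le\mathrm{dist}(\alpha,\Z)^{-2}\,\|\nabla\widetilde u\|_{L^2(R)}^2$. Transferring back through $\Psi$ — whose Jacobian $h$ and whose derivatives are controlled by $\|f_\pm\|_{C^{0,1}}$ and $c_0$ — produces $\|u\|_{L^2(D)}^2\le C_\alpha\,\|\nabla u\|_{L^2(D)}^2$ with $C_\alpha$ depending only on $\mathrm{dist}(\alpha,\Z)$, $\|f_\pm\|_{C^{0,1}}$ and $c_0$, hence not on $k$.

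\emph{Main obstacle.} The delicate point is the geometric reduction, not the analysis: one must check that flattening $D$ to the product domain $R$ preserves the $\alpha$-quasi-periodicity (this is where $f_\pm(-\pi)=f_\pm(\pi)$ is used) and that pushing the gradient through $\Psi$ only costs a fixed geometric factor — in particular the sheared term in $\pa_{x_1}\widetilde u=(\pa_{x_1}u)\circ\Psi+\pa_{x_1}(f_-+th)\,(\pa_{x_2}u)\circ\Psi$ is still dominated by $|\nabla u|\circ\Psi$ up to $\|f_\pm\|_{C^{0,1}}$. An alternative that avoids explicit constants is a compactness argument: were the inequality false, there would be $u_j\in H^1_\alpha(D)$ with $\|u_j\|_{L^2(D)}=1$ and $\|\nabla u_j\|_{L^2(D)}\to0$; by Rellich a subsequence converges in $H^1(D)$ to some $u\in H^1_\alpha(D)$ with $\nabla u=0$, so $u$ is constant on the connected domain $D$, and since $\alpha\notin\Z$ the quasi-periodicity forces that constant to be $0$, contradicting $\|u\|_{L^2(D)}=1$; as $k$ enters nowhere, the resulting constant is again independent of $k$.
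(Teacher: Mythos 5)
Your proposal is correct, and your \emph{primary} argument takes a genuinely different route from the paper; your closing ``alternative'' is, almost word for word, the paper's actual proof. The paper argues by contradiction: take $u_n$ with $\|u_n\|_{L^2(D)}=1$ and $\|\nabla u_n\|_{L^2(D)}\to 0$, use Rellich to extract a limit $u$ with $\nabla u=0$ and $\|u\|_{L^2(D)}=1$, and observe that a nonzero constant on the connected domain $D$ can be $\alpha$-quasi-periodic only if $\mathrm{e}^{2\pi\i\alpha}=1$, which fails for $\alpha\notin\Z$. Your main argument --- expanding each horizontal slice in the exponentials $\mathrm{e}^{\i\alpha_n x_1}$ after flattening $D$ onto $(-\pi,\pi)\times(0,1)$ --- is a constructive alternative. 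What it buys is an explicit constant, essentially $\mathrm{dist}(\alpha,\Z)^{-2}$ times geometric factors controlled by $\|f_\pm\|_{C^{0,1}}$ and the lower bound on $f_+-f_-$, which quantifies the blow-up of $C_\alpha$ as $\alpha$ approaches an integer; the price is the extra geometric bookkeeping you correctly flag (strict separation $f_+-f_-\ge c_0>0$, the matching values $f_\pm(-\pi)=f_\pm(\pi)$ forced by the $2\pi$-periodicity of $Q$, and the Fubini-type argument that a.e.\ slice of an $H^1_\alpha$ function is a one-dimensional quasi-periodic $H^1$ function), all of which hold under the representation \eqref{D-rep} and the Lipschitz assumption on $D$. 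The compactness route is shorter and needs only that $D$ is a bounded, connected Lipschitz domain, but it yields no information about $C_\alpha$ beyond existence and $k$-independence. Both arguments are sound.
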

\begin{proof}
Assume on the contrary that $H^1_\alpha (D)$ does not satisfy a Poincar\'{e} inequality. This implies that we can find a sequence $u_n$ such that $\|u_n \|^2_{L^2(D)} = 1$ for all $n \in \N$ and $\|\nabla u_n \|^2_{L^2(D)} \to 0$ as $n \to \infty$. By Rellich’s  compact embedding we can conclude that $u_n$ (up to a subsequence) converges weakly in $H^1(D)$ to $u$ such that the weak limit satisfies $\|u \|^2_{L^2(D)} = 1$ and $\|\nabla u \|^2_{L^2(D)} = 0$. We have that $u$ is a non-zero constant and quasi-periodic which implies that $1=\text{e}^{2\pi \i \alpha}$ which can not be since $\alpha \notin \Z$. Therefore, we have proven that there is a constant $C_\alpha > 0$ such that $\|u\|^2_{L^2(D)} \leq C_\alpha \|\nabla u\|^2_{L^2(D)}$ for all $u \in  H^1_\alpha (D)$. 
%
\end{proof}

In order to insure that the space $H^1_{\alpha}(D)$ satisfies the above Poincar\'{e} inequality we will  now assume that $\alpha \notin \Z$ for the rest of the section. We now reformulate the quasi-periodic transmission eigenvalue problem \eqref{te-prob3}--\eqref{te-prob4} as a problem for $u=v-w \in H^1_{0,\alpha}(D)$ where the Hilbert space 
$$ H^1_{0,\alpha}(D)=\big\{ u \in H_\alpha^1(D) \, : \, u = 0 \quad \textrm{on } \Gamma_\pm  \big\}$$
equipped with the $H^1(D)$ norm. By subtracting the equations and boundary conditions in \eqref{te-prob3}--\eqref{te-prob4} for the eigenfunctions $w$ and $v$ we obtain 
 \begin{eqnarray}
\div( A \grad u) +k^2u=\div( Q \grad v) &\quad \textrm{in }& D \label{te-prob5}\\
\frac{\partial u}{\partial \nu_A}=\nu \cdot Q\grad v &\quad \textrm{on  }& \Gamma_\pm. \label{te-prob6}
\end{eqnarray}
In order to completely reformulate the problem for the difference $u$ we must show that \eqref{te-prob5}--\eqref{te-prob6} defines a bounded linear mapping $u \mapsto v$ from $ H^1_{0,\alpha}(D) \longmapsto H^1_\alpha (D)$. Notice that this implies that \eqref{te-prob3}--\eqref{te-prob4} and \eqref{te-prob5}--\eqref{te-prob6} are equivalent provided that $v$ satisfies~\eqref{te-prob3} by taking $w=v-u$ since $H^1_{0,\alpha}(D) \subset H^1_\alpha (D)$. The variational formulation of  \eqref{te-prob5}--\eqref{te-prob6} is given by 
\begin{eqnarray}
\int_{D} Q \grad v \cdot \grad \overline{\varphi} \, \d x =  \int_{D} A \grad u \cdot \grad \overline{\varphi} - k^2u\overline{\varphi} \, \d x \quad \textrm{ for all } \, \, \varphi \in H^1_\alpha (D). \label{utov}
\end{eqnarray}
Due to the Poincar\'{e} inequality and the fact that $Q$ is a uniformly positive definite matrix with bounded entries give that \eqref{utov} is well-posed by the Lax-Milgram theorem. Next, we define the operator $\Lk$ that maps $ H^1_{0,\alpha}(D)$ into itself via the Riesz representation theorem such that 
\begin{equation}
\label{dl}
\big(\Lk u, \varphi\big)_{H^1(D)}=\int_{D} \grad v_u \cdot \grad \overline{\varphi} -k^2 v_u  \overline{\varphi}\, \d x  \quad \textrm{ for all } \, \, \varphi \in H^1_{0,\alpha} (D).
\end{equation}
where $v_u$ is the unique solution to \eqref{utov}. This operator  $\Lk$ is a key ingredient to the study of the transmission eigenvalue problem.  Note that the right hand side of~\eqref{dl} is designed to take into account the fact that $v_u$ satisfies~\eqref{te-prob3}. It is obvious that $\Lk$ depends continuously on $k$. More importantly, we have that if  $u$ is in the kernel of $\Lk$ for some $k>0$, then $w=v_u-u$ and $v_u$ are quasi-periodic transmission eigenfunctions with  transmission eigenvalue  $k$. Vice versa, if $w$ and $v$ are quasi-periodic transmission eigenfunctions with eigenvalue $k$, then $u = v-w$  belongs
to the kernel of $\Lk$.

In order to prove the existence of the transmission eigenvalues $k$ we need to determine some properties of the operator $\Lk$. To this end, we will denote
 $v_j$ to be the unique solution to \eqref{utov} for a given $u_j$ and $w_j = u_j-v_j$ for $j=1,2$. Then similar calculations as in \cite{Cakon2010} gives that 
\begin{eqnarray}
\big(\Lk u_1, u_2\big)_{H^1(D)} = \int_{D} \grad u_1 \cdot \grad \overline{u}_2 -k^2 u_1  \overline{u}_2 \, \d x + \int_{D} Q \grad w_1 \cdot \grad \overline{w}_2 \, \d x. \label{lproperties}
\end{eqnarray}
Notice, that since $Q$ is a real symmetric matrix we have that the sesquilinear form in \eqref{lproperties} is Hermitian giving that $\Lk$ is a selfadjoint operator. Now, taking $k=0$ we obtain that 
$$\big(\mathbb{L}_0 u, u\big)_{H^1(D)} = \int_{D} | \grad u |^2 \, \d x + \int_{D} Q  \grad w\cdot\grad \ol{w} \, \d x$$
which gives that $\mathbb{L}_0$ is a coercive operator due to the Poincar\'{e} inequality and the fact that $Q$ is a positive definite matrix. We now show that the operator $\Lk-\mathbb{L}_0$ is compact. Indeed, let the sequence $u_n$ in $H^1_{0,\alpha}(D)$ weakly converge to zero as $n \to \infty$. Therefore, we have that the sequence of solutions to \eqref{utov} denoted $v_{n,k}$ in $H^1_{0,\alpha}(D)$ (where we explicitly denote the dependance on  $k$) weakly converges to zero as $n \to \infty$ for all $k \in \R$ by the well-posedness of equation \eqref{utov}. Rellich's compact embedding implies that $u_n$ and $v_{n,k}$  converges to zero in the $L^2(D)$ norm. By subtracting equation \eqref{utov} for $k \neq 0$ and $k=0$ gives that 
$$\int_{D }Q \grad (v_{n,k}-v_{n,0}) \cdot \grad \overline{\varphi} \, \d x= -k^2 \int_{D} u_n \overline{\varphi}  \, \d x  \, \, \, \, \text{ for any }  \, \, \varphi \in H_{\alpha}^1(D)$$
which implies that 
$v_{n,k}-v_{n,0}$ converges to zero in the $H^1(D)$ norm by letting $\varphi = v_{n,k}-v_{n,0}$ and appealing to fact that $Q$ is uniformly positive definite. We now have that 
\begin{eqnarray*}
\Big( (\Lk- \mathbb{L}_0)u_n, \varphi \Big)_{H^1(D)}=\int_{D} \grad (v_{n,k} - v_{n,0}) \cdot \grad \overline{\varphi} -k^2  v_{n,k} \overline{\varphi}  \, \d x  \end{eqnarray*}
 and by the Cauchy-Schwartz inequality

$$\Big\| (\Lk -\mathbb{L}_0)u_n \Big\|_{H^1(D)} \leq \Big( || v_{n,k} - v_{n,0} ||_{H^1(D)} + k^2 || v_{n,k} ||_{L^2(D)} \Big) \to 0 \quad \text{as} \quad n \to \infty.$$
This gives that $\Lk -\mathbb{L}_0$ is compact. From the above analysis we have the following result. 

\begin{lemma}\label{lemmap}
For any $k \in \R$ the operator $\Lk : H^1_{0,\alpha}(D)  \longmapsto H^1_{0,\alpha}(D)$ satisfies:
\begin{enumerate}
\item $\Lk$ is self-adjoint
\item $\mathbb{L}_0$ is coercive
\item $\Lk- \mathbb{L}_0$ is a compact.
\end{enumerate}
\end{lemma}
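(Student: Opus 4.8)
The plan is to prove the three properties of $\Lk$ directly from the representation formula~\eqref{lproperties}, essentially following the scheme of~\cite{Cakon2010,Harri2014} adapted to the quasi-periodic space $H^1_{0,\alpha}(D)$. Most of the heavy lifting has actually been carried out already in the paragraphs preceding the statement: the identity~\eqref{lproperties} was derived, the Poincar\'e inequality for $H^1_\alpha(D)$ is available as the earlier lemma, the coercivity of $\mathbb{L}_0$ was established using that inequality together with uniform positive definiteness of $Q$, and the compactness of $\Lk-\mathbb{L}_0$ was shown by a weak-convergence argument exploiting Rellich's embedding. So the proof is mostly a matter of assembling these pieces into a clean statement, but I would still present each item carefully.

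First I would record self-adjointness: since $Q$ is real symmetric with bounded measurable entries, the sesquilinear form on the right-hand side of~\eqref{lproperties} satisfies $\big(\Lk u_1,u_2\big)_{H^1(D)} = \overline{\big(\Lk u_2,u_1\big)_{H^1(D)}}$ — the first integral $\int_D \nabla u_1\cdot\nabla\overline{u}_2 - k^2 u_1\overline{u}_2\,\d x$ is plainly Hermitian for real $k$, and $\int_D Q\nabla w_1\cdot\nabla\overline{w}_2\,\d x$ is Hermitian because $Q=Q^\top$ is real. Combined with boundedness of $\Lk$ (clear from Lax--Milgram well-posedness of~\eqref{utov} and the Riesz definition~\eqref{dl}), this gives (1). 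Second, for (2), I would set $k=0$ in~\eqref{lproperties}, note that then $v_u$ solves~\eqref{utov} with the $k^2$ term absent so the identity reads $\big(\mathbb{L}_0 u,u\big)_{H^1(D)} = \int_D |\nabla u|^2\,\d x + \int_D Q\nabla w\cdot\nabla\overline{w}\,\d x \ge \int_D|\nabla u|^2\,\d x \ge c\,\|u\|^2_{H^1(D)}$, where the last step uses the Poincar\'e inequality (valid since $\alpha\notin\Z$) to bound $\|u\|^2_{L^2(D)}$ by a constant times $\|\nabla u\|^2_{L^2(D)}$, and we have discarded the nonnegative $Q$-term using $\Re Q\xi\cdot\overline\xi\ge c_1|\xi|^2$. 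Third, for (3), I would reproduce the weak-convergence argument already sketched: take $u_n\rightharpoonup 0$ in $H^1_{0,\alpha}(D)$, deduce $v_{n,k}\rightharpoonup 0$ by well-posedness of~\eqref{utov}, upgrade $u_n\to 0$ and $v_{n,k}\to 0$ in $L^2(D)$ by Rellich, subtract the variational equations for $k$ and $0$ to get $\|v_{n,k}-v_{n,0}\|_{H^1(D)}\to 0$ via uniform positive definiteness of $Q$, and then read off from~\eqref{dl} and Cauchy--Schwarz that $\|(\Lk-\mathbb{L}_0)u_n\|_{H^1(D)}\le \|v_{n,k}-v_{n,0}\|_{H^1(D)} + k^2\|v_{n,k}\|_{L^2(D)}\to 0$, so $\Lk-\mathbb{L}_0$ maps weakly convergent sequences to strongly convergent ones, i.e.\ is compact.

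Since the substance is already in place, there is no serious obstacle; the only point that requires genuine care is making sure the argument for compactness is applied at each fixed $k\in\R$ (the constant in the bound and the well-posedness of~\eqref{utov} are $k$-dependent but that is harmless since we fix $k$), and that the Poincar\'e inequality used in (2) is exactly the one proved earlier, which is why the standing assumption $\alpha\notin\Z$ must be invoked. I would close by remarking that Lemma~\ref{lemmap} places $\Lk$ in the setting to which the standard existence theory for transmission eigenvalues (as in~\cite{Cakon2010,Cakon2016}) applies — a self-adjoint, continuously $k$-dependent family that is a coercive operator plus a compact perturbation — which is precisely what the existence proof in the next part of the paper will exploit.
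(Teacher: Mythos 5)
Your proposal is correct and follows essentially the same route as the paper: the paper also reads all three properties off the identity~\eqref{lproperties} (self-adjointness from $Q$ being real symmetric, coercivity of $\mathbb{L}_0$ at $k=0$ via the Poincar\'e inequality, and compactness of $\Lk-\mathbb{L}_0$ via the weak-convergence/Rellich argument), and then states the lemma as a summary of that analysis. Nothing to add.
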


By appealing to the theory developed in \cite{Cakon2010} in order to prove the existence of  transmission eigenvalues we now need to show that $\Lk$ is positive on $H^1_{0,\alpha}(D)$ for some $k_{m}$ and is non-positive on a $M$--dimensional subspace of $H^1_{0,\alpha}(D)$ for some $k_{M}$. This would imply that  there are $M$  transmission eigenvalues by the arguments in Section 2.1 of \cite{Cakon2010}.  

\begin{theorem}\label{eig-exist}
There exists infinitely many  transmission eigenvalues.
\end{theorem}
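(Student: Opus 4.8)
The plan is to verify the abstract existence criterion of Section~2.1 of \cite{Cakon2010} for the family $k\mapsto\Lk$. By Lemma~\ref{lemmap} this family is self-adjoint, continuous in $k$, with $\mathbb{L}_0$ coercive and $\Lk-\mathbb{L}_0$ compact, so it suffices to exhibit (i) a wave number $k_m>0$ for which $\mathbb{L}_{k_m}$ is positive on $H^1_{0,\alpha}(D)$, and (ii) for every $M\in\N$ a wave number $k_M>k_m$ together with an $M$-dimensional subspace of $H^1_{0,\alpha}(D)$ on which $\mathbb{L}_{k_M}$ is non-positive; then \cite{Cakon2010} yields at least $M$ transmission eigenvalues in $(k_m,k_M)$, and letting $M\to\infty$ gives infinitely many.

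Step (i) is immediate. Taking $u_1=u_2=u$ in \eqref{lproperties}, using that here $\Im Q=0$ with $Q$ uniformly positive definite (so $\int_D Q\nabla w\cdot\nabla\overline w\ge0$) and the Poincar\'e inequality $\|u\|^2_{L^2(D)}\le C_\alpha\|\nabla u\|^2_{L^2(D)}$ established above, one gets
\[
\big(\Lk u,u\big)_{H^1(D)}\ \ge\ \|\nabla u\|^2_{L^2(D)}-k^2\|u\|^2_{L^2(D)}\ \ge\ (1-k^2C_\alpha)\,\|\nabla u\|^2_{L^2(D)} ,
\]
so $\mathbb{L}_{k_m}$ is coercive, in particular positive, for any fixed $k_m\in(0,C_\alpha^{-1/2})$.

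For step (ii) I would compare with a disk of constant index. Since $D$ has nonempty interior, fix a disk $B$ with $\overline B\subset D$ whose closure meets neither $\Gamma_\pm$ nor the lines $\{x_1=\pm\pi\}$; extension by zero then embeds $H^1_0(B)$ into $H^1_{0,\alpha}(D)$ (the extension is automatically $\alpha$-quasi-periodic). Put $\mu:=1+c_1>1$, where $c_1$ is the ellipticity constant of Assumption~\ref{th:assumption1} (here $Q\xi\cdot\overline\xi\ge c_1|\xi|^2$), and let $\mathbb{L}_k^{\mu,B}$ be the transmission-eigenvalue operator of $B$ with constant index $\mu$ — defined as in \eqref{utov}--\eqref{dl} with $D$, $A$ replaced by $B$, $\mu I$, on the subspace $\{\phi\in H^1_0(B):\int_B\phi=0\}$ where the auxiliary problem is solvable. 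Since $\mu\neq1$, it is classical that a disk of constant index has infinitely many transmission eigenvalues (see \cite{Cakon2010,Cakon2016}, or the explicit Bessel-function computation), equivalently that for every $M$ there is $k_M>0$, which may be taken as large as we please, and an $M$-dimensional subspace $\mathcal W_M$ on which $\mathbb{L}_{k_M}^{\mu,B}$ is non-positive. The crux is then the comparison
\[
\big(\Lk\phi,\phi\big)_{H^1(D)}\ \le\ \big(\mathbb{L}_k^{\mu,B}\phi,\phi\big)_{H^1(B)}\qquad\text{for all }\phi\in\mathcal W_M,\ k>0 ,
\]
which transfers non-positivity from the disk to $D$; choosing $M$ large enough that $k_M>k_m$ then supplies (ii). To prove the comparison, observe that $w=v_\phi-\phi$ minimizes the coercive quadratic functional associated with \eqref{utov}, whence $\int_D Q\nabla w\cdot\nabla\overline w=\max_{\psi\in H^1_\alpha(D)}\big(2\Re\int_D(\nabla\phi\cdot\nabla\overline\psi-k^2\phi\overline\psi)-\int_D Q\nabla\psi\cdot\nabla\overline\psi\big)$, with the analogous identity $(\mu-1)\int_B|\nabla W_\phi|^2=\max_{\eta\in H^1(B)}\big(2\Re\int_B(\nabla\phi\cdot\nabla\overline\eta-k^2\phi\overline\eta)-(\mu-1)\int_B|\nabla\eta|^2\big)$ on $B$, where $W_\phi$ is the disk analogue of $w$; since $\phi$ is supported in $B$ and $Q\ge c_1I=(\mu-1)I$, the functional being maximized over $H^1_\alpha(D)$ is at each $\psi$ bounded above by its disk analogue at $\psi|_B$, so $\int_D Q\nabla w\cdot\nabla\overline w\le(\mu-1)\int_B|\nabla W_\phi|^2$. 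As the remaining two terms of \eqref{lproperties} agree on $D$ and $B$ for $\phi$ supported in $B$, the comparison follows.

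I expect the comparison in (ii) to be the main obstacle. A direct dimension count fails because the correction term $\int_D Q\nabla w\cdot\nabla\overline w$ itself grows with $k$ — the solution map of \eqref{utov} has operator norm of order $k^2$ — so any crude upper bound for the quadratic form of $\Lk$ reacquires a positive $k$-dependent term and becomes useless for large $k$; it is the minimization/duality structure of $\int_D Q\nabla w\cdot\nabla\overline w$, together with the domain- and coefficient-monotonicity built into the disk comparison, that resolves this. The remaining points are routine bookkeeping: the embedding $H^1_0(B)\hookrightarrow H^1_{0,\alpha}(D)$, the zero-mean constraint making $\mathbb{L}_k^{\mu,B}$ well defined, and keeping $\alpha\notin\Z$ in force throughout so that the Poincar\'e inequality — hence well-posedness of \eqref{utov} — holds.
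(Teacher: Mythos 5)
Your step (i) coincides with the paper's, and your comparison inequality in step (ii) is correct: the duality characterization of $\int_D Q\grad w\cdot\grad\overline w$ as a maximum, combined with $\phi$ supported in $B$ and $Q\ge c_1 I=(\mu-1)I$, does give $\int_D Q\grad w\cdot\grad\overline w\le(\mu-1)\int_B|\grad W_\phi|^2$, which is exactly the domain/coefficient monotonicity the paper obtains instead by a Cauchy--Schwarz estimate against the ball eigenfunction. The genuine gap is the input you feed into this comparison. You assert, as ``classical'' and as ``equivalent'' to the disk having infinitely many transmission eigenvalues, that for every $M$ there is a single wave number $k_M$ and an $M$-dimensional subspace $\mathcal W_M\subset H^1_0(B)$ on which the constant-index disk operator $\mathbb{L}^{\mu,B}_{k_M}$ is non-positive. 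No cited result gives this: the abstract theory of \cite{Cakon2010} yields only the opposite implication (non-positivity on an $M$-dimensional subspace implies at least $M$ eigenvalues), and the Bessel computation produces kernels of $\mathbb{L}^{\mu,B}_{k}$ at a sequence of distinct values of $k$, which does not force the quadratic form at the single larger value $k_M$ to be non-positive on the span of the corresponding eigen-differences. Indeed $k\mapsto(\mathbb{L}^{\mu,B}_{k}\phi,\phi)$ is not monotone (the auxiliary term $\int Q\grad w_k\cdot\grad\overline{w_k}$ depends on $k$ with no definite sign of its derivative), and even pointwise non-positivity at basis vectors would not control the cross terms on the span. As written, your argument proves the existence of one transmission eigenvalue (the case $M=1$, using the smallest disk eigenvalue, works), but not infinitely many; proving your disk claim would essentially require the very construction you are trying to avoid.

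The paper resolves this differently, and that is the natural repair: the high-dimensional non-positive subspace is manufactured at a \emph{single} wave number by geometry rather than by increasing $k$ on a fixed ball. Pack $M_\eps$ disjoint balls $B_j$ of radius $\eps$ with $\overline{B_j}\subset D$, let $k_\eps$ be the smallest transmission eigenvalue of the constant-coefficient problem \eqref{stuff1}--\eqref{stuff2} on an $\eps$-ball (the same for every $B_j$ since $A_{\text{min}}$ is constant), and take the span of the zero-extensions $u_j$ of the eigen-differences $v_\eps-w_\eps$; disjoint supports make this span $M_\eps$-dimensional, and your comparison (or the paper's Cauchy--Schwarz argument) applied ball-by-ball shows $(\mathbb{L}_{k_\eps}u,u)_{H^1(D)}\le 0$ for every $u$ in the span, since both $\int_D|\grad u|^2-k_\eps^2|u|^2\,\d x$ and the upper bound for $\int_D Q\grad w\cdot\grad\overline w\,\d x$ decouple over the disjoint balls. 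Since $M_\eps\to\infty$ as $\eps\to0$, the criterion of \cite{Cakon2010} then yields infinitely many eigenvalues. Your minor points (extension by zero, the zero-mean compatibility for the ball problem, which eigen-differences satisfy automatically, and $\alpha\notin\Z$) are fine; the missing piece is precisely the construction of the $M$-dimensional subspace.
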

\begin{proof}
We begin by showing that for all $k$ sufficiently small the operator $\Lk$ is positive. To this end, notice that by \eqref{lproperties} and the fact that $Q$ is a positive definite matrix we have that 
$$\big(\mathbb{L}_k u, u\big)_{H^1(D)} \geq \int_{D} | \grad u |^2 -k^2 | u |^2\, \d x.$$
By appealing to the Poincar\'{e} inequality we obtain the estimate 
$$\big(\mathbb{L}_k u, u\big)_{H^1(D)} \geq  \left( 1- k^2 C_\alpha \right) \int_{D} | \grad u |^2 \, \d x.$$
We can then conclude that $\Lk$ is positive on $H^1_{0,\alpha}(D)$ for all $k^2 < 1/C_\alpha$. 

Now the goal is to prove that for some subspace of $H^1_{0,\alpha}(D)$ and value $k$ that $\Lk$ is non-positive. Therefore, we let $\overline{B} \subset D$ be a ball of radius $\eps$ centering at some point $x \in D$. Define $k_\eps>0$ to be the smallest transmission eigenvalue of 
\begin{eqnarray}
A_{\text{min}} \Delta w_\eps +k_\eps^2 w_\eps=0 \quad && \textrm{ and } \qquad  \Delta v_\eps + k_\eps^2 v_\eps=0 \quad \textrm{ in } \,\,  B \label{stuff1}\\
w_\eps=v_\eps \quad && \textrm{ and } \qquad \frac{\partial w_\eps}{\partial \nu_{A}}=\frac{\partial v_\eps}{\partial \nu} \quad \textrm{ on } \,\,\partial B \label{stuff2}
\end{eqnarray}
where 
$$ \inf_{ x \in D} \inf_{|\xi|=1} \overline{\xi} \cdot A(x) \xi =A_{\text{min}} \quad \text{ which gives that } \quad  \inf_{ x \in D} \inf_{|\xi|=1} \overline{\xi} \cdot Q(x) \xi =A_{\text{min}} - 1 .$$
We can define $u_\eps = v_\eps - w_\eps$ in $H^1_0(B)$ and its extension by zero to all of $D$ by $ u$ in $H^1_0(D)$. Now let $ v$ in $H^1_\alpha (D)$ be the solution to \eqref{utov} with $k_\eps$ and $ u$ and $ w =  v - u$. Using Green's theorem and some simple calculations give that (see for e.g. \cite{Cakon2010})
$$\int_{B} \big(A_{\text{min}} - 1\big) \grad w_\eps \cdot \grad \overline{\varphi} \,\d x = \int_{B}  \grad u_\eps \cdot \grad \overline{\varphi}  - k_\eps^2 u_\eps \overline{\varphi}\, \d x$$
and 
$$\int_{D} Q \grad  w \cdot \grad \overline{\varphi} \,\d x = \int_{D}  \grad  u \cdot \grad \overline{\varphi}  - k_\eps^2  u \overline{\varphi}\, \d x \quad \text{ for all } \quad \varphi \in H^1_\alpha (D).$$
Now, notice that by the definition of $ u$ 
\begin{eqnarray*}
\int_{D} Q \grad  w \cdot \grad \ol\varphi \,\d x  = \int_{D}  \grad  u \cdot \grad \overline{\varphi}  - k_\eps^2  u \overline{\varphi}\, \d x &=& \int_{B}  \grad  u_\eps \cdot \grad \overline{\varphi}  - k_\eps^2 u_\eps \overline{\varphi}\, \d x\\
&=& \int_{B} \big(A_{\text{min}} - 1\big) \grad w_\eps \cdot \grad \overline{\varphi}  \, \d x 
\end{eqnarray*}
Letting $\varphi =  w$ and estimating gives 
\begin{eqnarray*}
\int_{D} Q \grad  w \cdot \grad \ol w\,\d x &=& \int_{B} \big(A_{\text{min}} - 1\big) \grad w_\eps \cdot \grad \overline{ w}  \, \d x \\
&\leq& \left( \int_{B} \big(A_{\text{min}} - 1\big) |\grad w_\eps|^2 \, \d x \right)^{1/2} \left( \int_{B} \big(A_{\text{min}} - 1\big) |\grad  w|^2 \, \d x \right)^{1/2}\\
&\leq& \left( \int_{B} \big(A_{\text{min}} - 1\big) |\grad w_\eps|^2 \, \d x \right)^{1/2} \left( \int_{D} Q  \grad  w\cdot \grad \ol w \, \d x \right)^{1/2}.
\end{eqnarray*}
Therefore, we obtain that 
\begin{eqnarray*}
\big(\mathbb{L}_{k_\eps}  u,  u \big)_{H^1(D)} &=& \int_{D} | \grad  u |^2 -k_\eps^2 |  u |^2 \, \d x + \int_{D} Q  \grad  w\cdot \grad \ol w \, \d x\\
&\leq& \int_{B} | \grad  u_\eps |^2 -k_\eps^2 | u_\eps |^2 \, \d x + \int_{B} \big(A_{\text{min}} - 1\big)   |\grad  w_\eps|^2 \, \d x =0.
\end{eqnarray*}
This implies that $\mathbb{L}_{k_\eps}$ is non-positive on the subspace of $H^1_{0,\alpha}(D)$ which is the span of $u$ proving the existence of a transmission eigenvalue. 

We now wish to construct an infinite dimensional subspace of $H^1_{0,\alpha}(D)$ for which $\mathbb{L}_{k_\eps}$ is non-positive. To this end, we let $B_j$ be the ball centered at $x_j \in D$ with radius $\eps >0.$ Here we define $M_\eps$ to be the supremum of the number of disjoint balls $B_j$ such that $\overline{B_j} \subset D$. Notice that since the coefficient $A_{\text{min}}$ is constant we have $k_\eps$ being the smallest transmission eigenvalue of \eqref{stuff1}--\eqref{stuff2} is the same for each $B_j$. Defining $ u_j$ in $H^1_{0,\alpha}(D)$ for $j=1, \dots, M_\eps$ just as above and we have that since the supports are disjoint $u_j$ is orthogonal to $u_i$ for all $i \neq j$. Therefore, we can conclude that $\text{span}\{ {u}_1 , {u}_2 , \dots,  {u}_{M_\eps}  \}$ is a $M_\eps$--dimensional subspace of $H^1_{0,\alpha}(D)$. Due to the disjoint support of the basis functions we can follow the analysis above to show that $\mathbb{L}_{k_\eps}$ is non-positive for any $u$ in this $M_\eps$--dimensional subspace of $H^1_{0,\alpha}(D)$. Since $M_\eps \to \infty$ as $\eps \to 0$ we can conclude that there are infinitely many transmission eigenvalues. 
\end{proof}

\section{Numerical examples for the shape reconstruction}

In this section, we present some numerical examples examining  the performance of the factorization method 
for  different types of periodic structures and  data perturbed by artificial noise.  We also show  the dependence of the reconstructions 
on the number of the incident fields used.

The synthetic scattering data are generated by  solving the direct  problem with the  spectral Galerkin method 
studied in~\cite{Lechl2014}.  
We solve the direct problem for  incident fields $\varphi^\pm_n$ in~\eqref{eq:IncidentFields} where
 $n=-M,\dots,M$ ($M\in \N$). For each incident field we collect Rayleigh coefficients $\widehat{u}^\pm_j$
 of the corresponding scattered field
 for $j =-M,\dots,M$. The near field operator $N$ is then a $2\times 2$ block matrix. Each block is an $(2M+1) \times
 (2M+1)$ matrix whose $(n,j)$-entry is the $j$th Rayleigh coefficient of the scattered field generated 
 by the $n$th incident field. Two blocks of the block matrix correspond to $\varphi^+_n$ and $\varphi^-_n$ 
 and the other two  are for $\widehat{u}^+$ and $\widehat{u}^-$.
 As in the case of half space~\cite{Nguye2014} using the standard tools of linear algebra we can  easily construct 
 the matrix  $(WN)^{1/2}_\sharp$ and its eigensystem. 
To simulate the case of noisy data we add a noise matrix to data matrix $N$. This noise matrix contains  complex 
random numbers that are uniformly distributed in (0,1). 
To regularize the imaging functional for noisy data we truncate the singular values of 
$(WN)^{1/2}_\sharp$. More precisely, we drop the singular values that are less that 
$5 \times 10^{-4}$. We note that Tikhonov regularization can also be applied 
(as in the half space case~\cite{Nguye2014}) and would give similar results. 

As described above,  $M=10$ in the pictures  means that we use 21 ($2M+1$) incident plane waves and 21 Rayleigh coefficients
of the corresponding scattered fields. It also means that the series in~\eqref{eq:criteria} is truncated with $2(2M+1)$ terms.  
We use the wave number $k = 5.85$ for 
all the  examples. This means that we have
11 propagating modes for the examples and 10 evanescent modes for $M = 10$  and 30 evanescent modes  for $M = 20$. 
The pictures show that the imaging functional based on the factorization method is able to provide reasonable reconstructions
for the shape of several  types of periodic layers.  As in the previous results for the factorization method  for the periodic inverse scattering, 
the evanescent modes are quite important to have reasonable   reconstruction results. 
Here, for all four examples, we have respectively 10 and 30 evanescent modes in the scattering data when $M=10$ and $ M = 30$.

We also observe that the reconstruction results are quite stable with respect to noise
in the data  for the last two examples (Figures 4 and 5). However, the reconstructions for the first two examples are pretty sensitive
to noise, see~\cite{Arens2003a} for a similar situation. The results in Figures 2(d) and 3(d)  are chosen as the best results out of 10 numerical
experiments. We can't see anything reasonable in the worst cases of these numerical reconstructions. The imaging 
functional seems to have more stability in the numerical reconstructions  when the complement of periodic layer in one period is connected but we have no
justification for this behavior.

\section{Summary}

We study the inverse scattering problem for anisotropic periodic layers  and the transmission eigenvalues associated to the problem. 
The Factorization method is investigated as a tool to solve the inverse scattering problem with near field scattering data. This method provides both the unique determination and a fast imaging algorithm for the shape of the periodic scatterer. We present a justification of the Factorization method for the case of absorbing  
materials and some numerical examples to verify its performance. The interior transmission eigenvalue problem for the scattering from anisotropic periodic layers is formulated.  We prove the existence  of infinitely many transmission eigenvalues in the case of non-absorbing materials. 
The periodic scatterer in this paper is assumed to be given by Lipchitz domains and does not have holes. An extension of the results to the case
in which the scatterer has holes or is given by non-Lipchitz domains  is still an open problem.

 \begin{figure} 
\centering
\subfloat[Exact geometry]{\includegraphics[width=6.5cm]{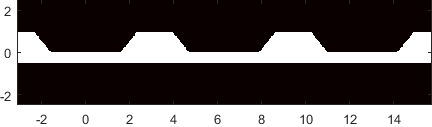}} \hspace{0.5cm}
\subfloat[M = 10]{\includegraphics[width=6.5cm]{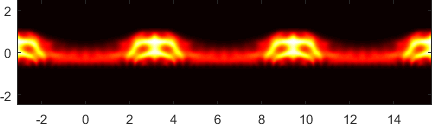}}\\
\subfloat[M = 20]{\includegraphics[width=6.5cm]{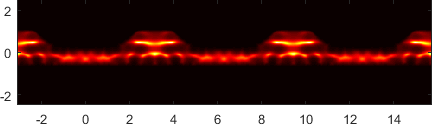}}  \hspace{0.5cm}
\subfloat[M = 20,5$\%$ noise]{\includegraphics[width=6.5cm]{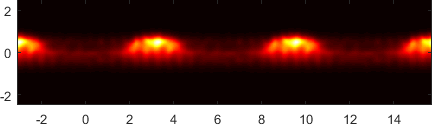}}\\
\caption{Shape reconstruction for the periodic layer of piecewise linear type.  
}
\end{figure}

\begin{figure} 
\centering
\subfloat[Exact geometry]{\includegraphics[width=6.5cm]{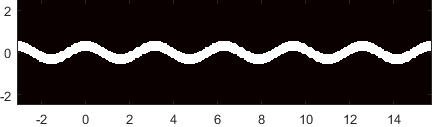}} \hspace{0.5cm}
\subfloat[M = 10]{\includegraphics[width=6.5cm]{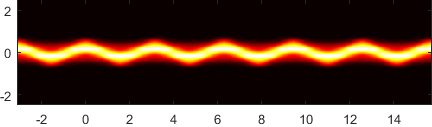}}\\
\subfloat[M = 20]{\includegraphics[width=6.5cm]{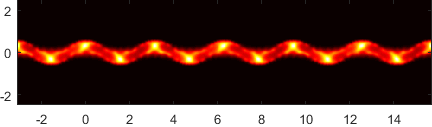}}  \hspace{0.5cm}
\subfloat[M = 20, 5$\%$ noise]{\includegraphics[width=6.5cm]{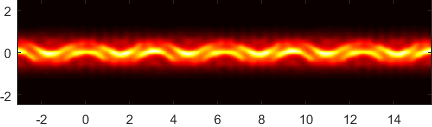}}\\
\caption{Shape reconstruction for the periodic layer of sinusoidal type. }
\end{figure}

 \begin{figure} 
\centering
\subfloat[Exact geometry]{\includegraphics[width=6.5cm]{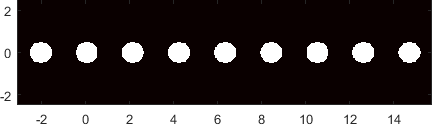}} \hspace{0.5cm}
\subfloat[M = 10]{\includegraphics[width=6.5cm]{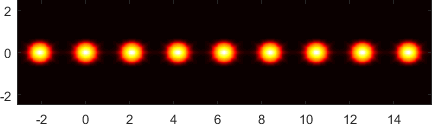}}\\
\subfloat[M = 20]{\includegraphics[width=6.5cm]{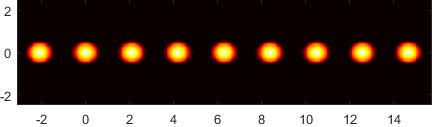}}  \hspace{0.5cm}
\subfloat[M = 20, 5$\%$ noise]{\includegraphics[width=6.5cm]{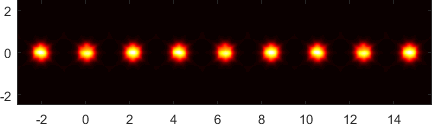}}\\
\caption{Shape reconstruction for the periodic layer of ball type.}
\end{figure}

 \begin{figure} 
\centering
\subfloat[Exact geometry]{\includegraphics[width=6.5cm]{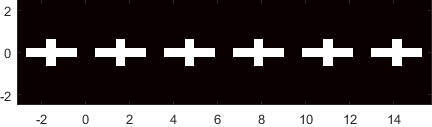}} \hspace{0.5cm}
\subfloat[M = 10]{\includegraphics[width=6.5cm]{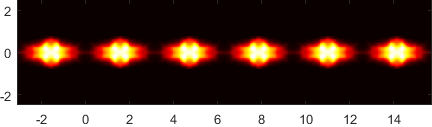}}\\
\subfloat[M = 20]{\includegraphics[width=6.5cm]{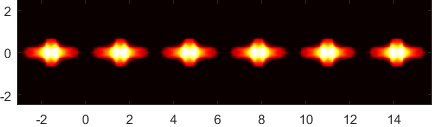}}  \hspace{0.5cm}
\subfloat[M = 20, 5$\%$ noise]{\includegraphics[width=6.5cm]{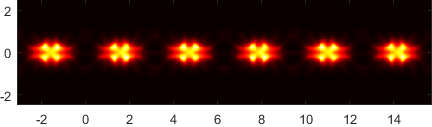}}\\
\caption{Shape reconstruction for the periodic layer of cross type.}
\end{figure}


\begin{thebibliography}{10}

\bibitem{Arens2005}
{\sc T.~Arens and N.~Grinberg}, {\em A complete factorization method for
  scattering by periodic structures}, Computing, 75 (2005), pp.~111--132.

\bibitem{Arens2003a}
{\sc T.~Arens and A.~Kirsch}, {\em The factorization method in inverse
  scattering from periodic structures}, Inverse Problems, 19 (2003),
  pp.~1195--1211.

\bibitem{Bao2014}
{\sc G.~Bao, T.~Cui, and P.~Li}, {\em Inverse diffraction grating of
  {M}axwell's equations in biperiodic structures}, Optics Express, 22 (2014),
  pp.~4799--4816.

\bibitem{Bonne1994}
{\sc A.-S. Bonnet-Bendhia and F.~Starling}, {\em Guided waves by
  electromagnetic gratings and non-uniqueness examples for the diffraction
  problem}, Math. Meth. Appl. Sci., 17 (1994), pp.~305--338.

\bibitem{Cakon2010a}
{\sc F.~Cakoni, D.~Colton, and H.~Haddar}, {\em On the determination of
  dirichlet or transmission eigenvalues from far field data}, C. R. Acad. Sci.
  Paris, 348 (2010), pp.~379--383.

\bibitem{Cakon2016}
\leavevmode\vrule height 2pt depth -1.6pt width 23pt, {\em Inverse Scattering
  Theory and Transmission Eigenvalues}, SIAM, 2016.

\bibitem{Cakon2015}
{\sc F.~Cakoni, H.~Haddar, and I.~Harris}, {\em Homogenization of the
  transmission eigenvalue problem for periodic media and application to the
  inverse problem}, Inverse Probl. Imaging, 9 (2015), pp.~1025--1049.

\bibitem{Cakon2010}
{\sc F.~Cakoni and A.~Kirsch}, {\em On the interior transmission eigenvalue
  problem}, Int. Jour. Comp. Sci. Math., 3 (2010), pp.~142--167.

\bibitem{Elsch2012}
{\sc J.~Elschner and G.~Hu}, {\em An optimization method in inverse elastic
  scattering for one-dimensional grating profiles}, Commun. Comput. Phys., 12
  (2012), pp.~1434--1460.

\bibitem{Hadda2017}
{\sc H.~Haddar and T.-P. Nguyen}, {\em Sampling methods for reconstructing the
  geometry of a local perturbation in unknown periodic layers}, Comput. Math.
  Appl., 74 (2017), pp.~2831--2855.

\bibitem{Harri2014}
{\sc I.~Harris, F.~Cakoni, and J.~Sun}, {\em Transmission eigenvalues and
  non-destructive testing of anisotropic magnetic materials with voids},
  Inverse Problems, 30 (2014), p.~035016.


\bibitem{Kleef2018}
{\sc  A.~Kleefeld and L.~Pieronek}, {\em Computing interior transmission eigenvalues for
homogeneous and anisotropic media}, Inverse Problems, 34 (2018), p.~105007.


\bibitem{Jiang2017}
{\sc X.~Jiang and P.~Li}, {\em Inverse electromagnetic diffraction by
  biperiodic dielectric gratings}, Inverse Problems, 33 (2017), p.~085004.

\bibitem{Kirsc2008}
{\sc A.~Kirsch and N.~Grinberg}, {\em The Factorization Method for Inverse
  Problems}, Oxford Lecture Series in Mathematics and its Applications 36,
  Oxford University Press, 2008.

\bibitem{Kirsc2013}
{\sc A.~Kirsch and A.~Lechleiter}, {\em The inside-outside duality for
  scattering problems by inhomogeneous media}, Inverse Problems, 29 (2013),
  p.~104011.

\bibitem{Lechl2013b}
{\sc A.~Lechleiter and D.-L. Nguyen}, {\em Factorization method for
  electromagnetic inverse scattering from biperiodic structures}, SIAM J.
  Imaging Sci., 6 (2013), pp.~1111--1139.

\bibitem{Lechl2014}
\leavevmode\vrule height 2pt depth -1.6pt width 23pt, {\em A trigonometric
  {G}alerkin method for volume integral equations arising in {TM} grating
  scattering}, Adv. Comput. Math., 40 (2014), pp.~1--25.

\bibitem{Nguye2014}
{\sc D.-L. Nguyen}, {\em Shape identification of anisotropic diffraction
  gratings for {TM}-polarized electromagnetic waves}, Appl. Anal., 93 (2014),
  pp.~1458--1476.

\bibitem{Nguye2019}
{\sc T.-P. Nguyen}, {\em Differential imaging of local perturbations in
  anisotropic periodic media}, Inverse Problems, accepted (2019).

\bibitem{Sandf2010}
{\sc K.~Sandfort}, {\em The factorization method for inverse scattering from
  periodic inhomogeneous media}, PhD thesis, Karlsruher Institut f{\"u}r
  Technologie, 2010.

\bibitem{Yang2012}
{\sc J.~Yang, B.~Zhang, and R.~Zhang}, {\em A sampling method for the inverse
  transmission problem for periodic media}, Inverse Problems, 28 (2012),
  p.~035004.

\end{thebibliography}

\end{document}